\documentclass[reqno]{amsart}
\usepackage{amssymb}
\usepackage[dvips]{epsfig}
\usepackage{graphicx}
\usepackage{color}
\usepackage{amsmath}
\usepackage{amssymb}
\usepackage{amsfonts}
\usepackage{amsthm}
\usepackage{bbm}
\usepackage{tikz}

\newcommand{\R}{\mathbb R}

\newcommand{\Z}{\mathbb Z}

\newcommand{\C}{\mathbb C}

\newcommand{\N}{\mathbb{N}}
\newcommand{\T}{\mathbb{T}}

\newtheorem{thm}{Theorem}[section]
\newtheorem{lem}[thm]{Lemma}
\newtheorem{prop}[thm]{Proposition}

\theoremstyle{remark}
\newtheorem{rem}{\bf Remark}[section]
\theoremstyle{definition}
\newtheorem{defn}[thm]{Definition}

\numberwithin{equation}{section}
\usepackage[colorlinks=true,linkcolor=blue,citecolor=blue]{hyperref}

\begin{document}

\title[]{Absence of eigenvalues of analytic quasi-periodic Schr\"odinger operators on $\R^d$}
\author{Yunfeng Shi}
\address[Y. Shi] {College of Mathematics,
Sichuan University,
Chengdu 610064,
China}

\date{\today}

\keywords{Absence of eigenvalues,  Multi-dimensional continuous Schr\"odinger operators, Quasi-periodic potentials, Phase transition}

\begin{abstract}
In this paper  we study  on $L^2(\R^d)$ the quasi-periodic Schr\"odinger operator $H=-\Delta+ \lambda V(x),$ where $V$ is a real analytic quasi-periodic function and $\lambda>0$. We first show that $H$ has no eigenvalues in \textit{low energy region}. We also provide in \textit{low energy region} the new phase transition parameter, i.e. the competition between the strength of coupling and the length for frequencies. 

\end{abstract}

\maketitle






\maketitle
\section{Introduction and main results}

Let $H=-\Delta+V(x)$ be the Schr\"odinger operator defined on $L^2(\R^d)$. The question of determining for which potential $V(x)$ such $H$  has no positive eigenvalues  attracted  a great deal of attention  over years, see e.g., \cite{Kat59,Sim69, Agm70,FH83,JK85,IJ03,Mar19}. In general,  those works require $V(x)$ to obey certain decaying law at $\infty$.   This may exclude a large class of potentials without  decaying, such as  the quasi-periodic one. The aim of the present paper  tries to establish absence of positive eigenvalues for some quasi-periodic Schr\"odinger operators on $\R^d$ for \textit{arbitrary}  $d\geq 1$.

Indeed, the quasi-periodic Schr\"odinger operator  has important applications in condensed matter physics, and has been extensively studied in recent years, in particular in its discrete case (see \cite{MJ17,Dam17} and references therein).

Considerable research results have been obtained  for  quasi-periodic Schr\"odinger operators on $\R$. In $1D$  and periodic potentials case, the Floquet theory works well,  and as a result the spectrum consists of intervals (bands) and is purely absolutely continuous (\textit{ac}).  When the periodic potential is replaced by a quasi-periodic one, the spectral properties change dramatically resulting from the so-called \textit{small-divisors} effect. Dinaburg-Sinai \cite{DS75} first proved the existence of \textit{ac}  spectrum by a using a KAM reducibility argument. This result was further developed later by R\"ussmann \cite{Rus80} and Moser-P\"oschel \cite{MP84}. Surace \cite{Sur90} showed for  potentials of the form
\begin{align}\label{surp}
V(x)=\lambda(\cos x+\cos(\theta+\omega x)),
\end{align}
the whole spectrum contains no eigenvalues if $\lambda$ is small and the frequency $\omega$ is Diophantine. He extended  the multi-scale analysis (MSA) of \cite{FS83,FSW90} to work in momentum space. The breakthrough came from Eliasson \cite{Eli92}, in which he proved for  any nonconstant analytic quasi-periodic potential and Diophantine frequency, the spectrum is purely \textit{ac} in high energy region.  His result also applied to the whole spectrum for small quasi-periodic potentials. Moreover, he  showed  the spectrum is a Cantor set for a generic set of analytic quasi-periodic potentials, which exhibits completely different spectral features compared with periodic potentials. For quasi-periodic potentials $\lambda V(x)$ with large $\lambda>0$, Anderson localization  (AL, i.e., pure point spectrum with exponentially decaying eigenfunctions) and de-localization transition \cite{YZ14} may be expected.  Sorets-Spencer \cite{SS91} improved Herman's subharmonic trick to obtain the positivity of Lyapunov exponent at low energy  for potentials \eqref{surp} if $\lambda\gg1$.  Fr\"ohlich-Spencer-Wittwer \cite{FSW90} proved the first AL for potentials of the form \eqref{surp} at low energy if $\lambda\gg1$. They performed a MSA for Green's functions  in  $x\in\R$ space directly. It has been proven by You-Zhou \cite{YZ14} that
there exists the phase transition from singular to purely \textit{ac} spectrum for  $1D$ and two frequencies quasi-periodic operators if the coupling is large.  Very recently, Binder-Kinzebulatov-Voda \cite{BKV17} proved a non-perturbative AL (i.e. AL under only positive Lyapunov exponents assumption) for general analytic potentials in \textit{finite} energy intervals by applying  methods of \cite{BG00,GS08}. For more results in $1D$ quasi-periodic setting, we refer to \cite{Bje06,DG14,Liu18}.

If one increases the space dimension to $d\geq2$, the situation becomes significantly more complicated and  much less results were obtained in this setting.  Unlike the $1D$  case,  it was conjectured  (i.e. the Bethe-Sommerfeld conjecture \cite{Par08,PS10}) that the spectrum of any periodic Schr\"odinger operator in higher dimensions  contains finitely many gaps.
By contrast, Damanik-Fillman-Gorodetski \cite{DFG19} had constructed $dD$ almost-periodic Schr\"odinger operators whose spectrum  is a generalized Cantor set of zero Lebesgue measure. The first  result concerning  \textit{ac} spectrum of a $2D$ quasi-periodic Schr\"odinger operator was due to Karpeshina-Shterenberg \cite{KS19}, where they obtained  the existence of \textit{ac} spectrum at high energy,  and the spectrum contains a semi-axis. The proof of this  elegant result is based on  their new  MSA in momentum space. Before this work, they \cite{KS13} got a similar result for $(-\Delta)^l+V(x)$ with $l\geq 2$. We also mention the work of Karpeshina-Lee-Shterenberg-Stolz \cite{KLSS17},  in which the existence of ballistic transport for the Schr\"odinger operator with limit-periodic or quasi-periodic potential in dimension two was established. There are also some results \cite{PS16,PS12} for quasi-periodic (even almost periodic) operators on $\R^d$ in dealing with  asymptotic expansion of the spectral functions (such as the IDS).

If $d\geq 3$, as far as we know, there are no localization or de-localization results available for a quasi-periodic Schr\"odinger operator on $\R^d$. 
This is the other  motivation of the present work.

In this paper we study on $\R^d$ ($d\geq 1$) the Schr\"odinger operators with analytic quasi-periodic potentials $\lambda V(x)$ and prove the absence of eigenvalues in \textit{ low energy region}. Combined our main theorem and results of \cite{Bje06,Bje07,YZ14,BKV17}, we also provide the new phase transition parameter in \textit{low energy region} (see (4) of Remark \ref{rm1} in the following).

Here is the set up of our main result:

Let $b_i\in\N$ ($1\leq i\leq d$) and  $b=\sum_{i=1}^db_i>d$. Define  $\T^{b}=\prod_{i=1}^d(\R/2\pi\Z)^{b_i}.$
Write $\vec\theta=(\vec\theta_1,\cdots,\vec\theta_d)\in \T^{b}$ for $\vec\theta_i=(\theta_{i1},\cdots,\theta_{ib_i})\in\T^{b_i}$ ($1\leq i\leq d$).  Similarly,  let $\vec\omega=(\vec{\omega}_1,\cdots,\vec\omega_d)\in[0,2\pi]^{b}$ with $\vec\omega_i\in[0,2\pi]^{b_i}$ ($1\leq i\leq d$). Let $ x=(x_1,\cdots,x_d)\in\R^d$ and define $x\vec{\omega}=(x_1\vec\omega_1,\cdots,x_d\vec\omega_d)\in\R^{b}.$

We consider on $L^2(\R^d)$ the following  Schr\"odinger operators with  quasi-periodic potentials
\begin{equation}\label{qps}
H(\vec\theta)=-\Delta+ \lambda V(\vec\theta+Kx\vec{\omega}),
\end{equation}
where the real function $V$  is the potential, $\lambda\geq 0$ is the coupling,  $\vec\omega$ {is} the frequency,  $\vec\theta$ is the phase and $K>0$.

We focus on the analytic function $V\in C^\omega(\T^{b};\R)$ satisfying $\int_{\T^{b}}V(\vec\theta){\rm d}\vec\theta=0$. Without loss of generality, we may assume for some $\rho>0$ and $\forall\ \mathbf{k}=(\mathbf{k}_1,\cdots,\mathbf{k}_d)\in\Z^{b}$ ($\mathbf{k}_i=(k_{i1},\cdots,k_{ib_i})\in\Z^{b_i}, 1\leq i\leq d$)
\begin{align}\label{gvc}
|\widehat V_\mathbf{k}|\leq  e^{-\rho|\mathbf{k}|},
\end{align}
where $\widehat V_\mathbf{k}=\int_{\T^{b}}V(\vec\theta)e^{-\mathbf{i}\mathbf{k}\cdot\vec\theta}{\rm d}\vec\theta$ (with $\mathbf{k}\cdot\vec\theta=\sum\limits_{i=1}^d\mathbf{k}_i\cdot\vec\theta_i=\sum\limits_{i=1}^d\sum\limits_{j=1}^{b_i}{k}_{ij}\theta_{ij} $) is the Fourier coefficient of $V(\vec\theta)$ and $|\mathbf{k}|=\max\limits_{1\leq i\leq d, 1\leq j\leq b_i}|{k}_{ij}|$.


Denote by ${\rm mes}(\cdot)$ the Lebesgue measure. We have 
\begin{thm}\label{mthm1}
Let $H(\vec\theta)$ be defined by \eqref{qps} with $V$ satisfying \eqref{gvc}. Then there is $c_\star=c_\star(b,d)>0$ (depending on $b_1,\cdots,b_d,d$) such that, for $\delta>0$ and $0<c_1<c_\star$, there exists $\varepsilon_0=\varepsilon_0(\delta,c_1,\rho,b,d)>0$ such that the following holds: If $0<\frac{\lambda}{K^2}\leq \varepsilon_0$, then there is some $\Omega=\Omega(K,\lambda)\subset[0,2\pi]^{b}$ satisfying ${\rm mes}([0,2\pi]^{b}\setminus \Omega)\leq \delta$ such that $H(\vec\theta)$ has no eigenvalues in $\left[-K^2(\log\frac{K^2}{\lambda})^{\frac{1}{2c_1}}, K^2(\log\frac{K^2}{\lambda})^{\frac{1}{2c_1}}\right]$ for every $\vec\theta \in \T^{b}$ and $\vec\omega\in\Omega$.
\end{thm}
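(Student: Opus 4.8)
The plan is to pass to the momentum (Fourier) side and carry out a multiscale analysis there, extending the one–dimensional argument of \cite{Sur90} to arbitrary $d$. Suppose $\psi\in L^2(\R^d)$ satisfies $H(\vec\theta)\psi=E\psi$; then $\widehat\psi$ solves, as an identity in $L^2$,
\begin{equation*}
(|\xi|^2-E)\,\widehat\psi(\xi)+\lambda\sum_{\mathbf k\in\Z^b}\widehat V_{\mathbf k}\,e^{\mathbf{i}\mathbf k\cdot\vec\theta}\,\widehat\psi\big(\xi-K\,\vec\omega\!\cdot\!\mathbf k\big)=0,\qquad \vec\omega\!\cdot\!\mathbf k:=(\vec\omega_1\!\cdot\!\mathbf k_1,\dots,\vec\omega_d\!\cdot\!\mathbf k_d)\in\R^d.
\end{equation*}
Rescaling $\xi=K\zeta$ and setting $g(\zeta)=\widehat\psi(K\zeta)$, $\varepsilon=\lambda/K^2\le\varepsilon_0$, $\mathcal E=E/K^2$ and $\mathcal L=(\log\varepsilon^{-1})^{1/(2c_1)}$ (so $\mathcal E\in[-\mathcal L,\mathcal L]$ and $\varepsilon=e^{-\mathcal L^{2c_1}}$), this becomes $(T_0-\mathcal E+\varepsilon W)g=0$ on $L^2(\R^d)$, with $T_0$ multiplication by $|\zeta|^2$ and $W=\sum_{\mathbf k}\widehat V_{\mathbf k}e^{\mathbf{i}\mathbf k\cdot\vec\theta}\tau_{\vec\omega\cdot\mathbf k}$, $\tau$ a translation. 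Since $|\vec\omega\!\cdot\!\mathbf k|\lesssim|\mathbf k|$, the bound \eqref{gvc} makes $W$ short range: its weight on a displacement of length $r$ is $\lesssim e^{-c\rho r}$ and $\|W\|\lesssim\rho^{-b}$; moreover $\vec\theta$ enters $W$ only through unimodular factors and $T_0$ not at all, so every estimate below is uniform in $\vec\theta$, which already gives the ``for every $\vec\theta$'' in the statement. Finally, if $\mathcal E<-2\varepsilon\|W\|$ a Neumann series inverts $T_0-\mathcal E+\varepsilon W$ and forces $g=0$; thus only $\mathcal E\ge-2\varepsilon\|W\|$ — i.e.\ $E\gtrsim-\lambda\rho^{-b}$ — requires work.

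First I would analyze the resonant structure. The obstruction to inverting $T_0-\mathcal E$ is the sphere $\Sigma_{\mathcal E}=\{\,|\zeta|^2=\mathcal E\,\}$, of radius $\le\mathcal L^{1/2}$, which $W$ couples to its translates $\Sigma_{\mathcal E}+\vec\omega\!\cdot\!\mathbf k$; the coupling across the bisecting ``Bragg'' hyperplane $P_{\mathbf v}=\{\,\mathbf v\!\cdot\!\zeta=|\mathbf v|^2/2\,\}$, $\mathbf v=\vec\omega\!\cdot\!\mathbf k$, has size $\varepsilon|\widehat V_{\mathbf k}|\le\varepsilon e^{-\rho|\mathbf k|}$, so only $\mathbf k$ with $|\mathbf k|$ below a threshold growing slowly with $\mathcal L$ produce a non-negligible resonance. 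Thus, up to harmless long-range errors, the resonant set is a \emph{finite} arrangement $\mathcal R_{\mathcal E}=\Sigma_{\mathcal E}\cup\bigcup_{\mathbf v}P_{\mathbf v}$ of hypersurfaces, which has Lebesgue measure zero. I would take $\Omega$ to be the set of $\vec\omega$ obeying a Diophantine bound $|\vec\omega\!\cdot\!\mathbf k|\ge\gamma|\mathbf k|^{-\tau}$ ($\mathbf k\ne0$) together with quantitative transversality/non-degeneracy conditions on $\{P_{\mathbf v}\}$ relative to $\Sigma_{\mathcal E}$ (so that mutual intersections of any sub-collection are submanifolds of the expected codimension, with controlled angles). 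These conditions are uniform in $\mathcal E\in[-\mathcal L,\mathcal L]$ and $\vec\theta$, and a routine measure estimate, the budget $\delta$ being spread over the finitely many hyperplanes in play, gives ${\rm mes}([0,2\pi]^b\setminus\Omega)\le\delta$ once $\varepsilon_0$ is small.

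The core is an iterated Schur–complement (multiscale) reduction that removes $\mathcal R_{\mathcal E}$ in stages. Choose thresholds $h_0\gg h_1\gg\cdots\gg h_N$, each still much larger than $\varepsilon\|W\|$; where $\big||\zeta|^2-\mathcal E\big|>h_0$ the diagonal dominates $\varepsilon W$, so a convergent Neumann expansion (with Combes--Thomas exponential decay, using the short range of $W$) expresses $g$ there through its restriction to the shell $S_{h_0}=\{\,\big||\zeta|^2-\mathcal E\big|\le h_0\,\}$, producing an effective equation on $S_{h_0}$ whose self-energy correction is negligible next to $\varepsilon W$. Inside $S_{h_0}$ one iterates: away from all $P_{\mathbf v}$ the diagonal again dominates, while near a single $P_{\mathbf v}$ — or near an intersection of several — the problem collapses to a $2\times2$ (resp.\ $m\times m$) block whose determinant, by the transversality built into $\Omega$, vanishes only on a submanifold of $\R^d$ of codimension $\ge2$ (resp.\ higher). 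Iterating through all of $\mathcal R_{\mathcal E}$, $g$ is squeezed onto ever-thinner neighbourhoods of ever-lower-dimensional pieces of a measure-zero set; combined with $g\in L^2$ this forces $g\equiv0$ — equivalently, the multiscale bounds amount to a uniform (local limiting-absorption type) resolvent estimate at $E$ that excludes an $L^2$-eigenfunction. Throughout one tests against characteristic functions of small cubes rather than $\delta$-functions, so the only property of $g$ used beyond the equation is $g\in L^2$.

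The hard part will be the resonance geometry for $d\ge2$, which has no analogue in the one-dimensional case of \cite{Sur90}: $\Sigma_{\mathcal E}$ is a genuine $(d-1)$-sphere, the small-divisor behaviour of $\vec\omega\!\cdot\!\mathbf k$ lets lattice translates accumulate on it under refinement (reflecting $b>d$), and an entire arrangement of Bragg hyperplanes together with all of their incidences must be controlled simultaneously and uniformly in $\mathcal E$ and $\vec\theta$ — this is what forces the exclusion of a positive-measure (but $\le\delta$) set of frequencies. The number and the ``depth'' of the resonances one must handle grow with the window $\mathcal L$, while the available smallness is $\varepsilon=e^{-\mathcal L^{2c_1}}$; the scheme closes only when the former is outpaced by the latter, and quantifying this trade-off is exactly what produces the admissible range $c_1<c_\star(b,d)$ and the required $\varepsilon_0=\varepsilon_0(\delta,c_1,\rho,b,d)$. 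Granting all of this, $g\equiv0$, hence $\psi\equiv0$, contradicting that $E$ is an eigenvalue; therefore $H(\vec\theta)$ has no eigenvalue in $\big[-K^2\mathcal L,\,K^2\mathcal L\big]$ for every $\vec\theta\in\T^b$ and every $\vec\omega\in\Omega$, which is the assertion.
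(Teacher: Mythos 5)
Your route is genuinely different from the paper's, but it has a real gap at its core. You propose to carry out the multiscale analysis directly on $L^2(\R^d)$ in momentum space, resolving resonances geometrically via the sphere $\Sigma_{\mathcal E}$ and the Bragg hyperplanes $P_{\mathbf v}$, in the spirit of Karpeshina--Shterenberg. The paper instead passes to a \emph{discrete} problem via Aubry duality (Lemma~\ref{shnol}): an eigenfunction $\Psi$ of $H(\vec\theta)$ produces, for a.e.\ $\Theta\in\R^d$, a sequence $Z_{\mathbf k}=e^{-\mathbf i\mathbf k\cdot\vec\theta}\widehat\Psi(\Theta+K\mathbf k\vec\omega)$ on $\Z^b$ that solves the dual equation $h(\Theta)Z=EZ$ and grows at most polynomially in $\mathbf k$. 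The whole MSA then lives on the lattice $\Z^b$ (with $\Theta$ as a real parameter), and one kills $Z$ via the Poisson identity together with the LDT's off-diagonal Green's function decay. This sidesteps the hard continuum resonance geometry entirely and lets the paper run the Bourgain/JLS machinery (elementary regions, Cartan's estimate, semi-algebraic frequency elimination) that is already known to work on $\Z^b$ for arbitrary $b$, $d$.

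The gap in your sketch is precisely the step you flag as ``the hard part'' and then grant. Your claim that ``up to harmless long-range errors, the resonant set is a \emph{finite} arrangement of hypersurfaces'' is only a first-scale statement. Because $b>d$, the orbit $\{\vec\omega\cdot\mathbf k\}_{\mathbf k\in\Z^b}$ is dense in $\R^d$, so under iteration (which composes translations, not just applies them once) the effective couplings reach sites $\mathbf k^{(1)}+\cdots+\mathbf k^{(n)}$ with weights $\varepsilon^n e^{-\rho\sum|\mathbf k^{(i)}|}$; the set of hyperplanes you must control grows with the scale, and their incidences on $\Sigma_{\mathcal E}$ become arbitrarily complicated. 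Controlling this for $d=2$ took KS a monograph-length argument, and you do not say what replaces it for general $d$; ``quantitative transversality/non-degeneracy on $\{P_{\mathbf v}\}$ relative to $\Sigma_{\mathcal E}$'' and ``a routine measure estimate'' are not a substitute. A second, smaller gap: your final step ``combined with $g\in L^2$ this forces $g\equiv0$'' is not an argument. The paper has to work for this: it proves the polynomial bound $|Z_{\mathbf k}|\le C(\Theta)(1+|\mathbf k|)^{5b}$ for a.e.\ $\Theta$ (a Shnol'-type fact coming from Fubini on $\int\sum_{\mathbf k}|Z_{\mathbf k}|^2(1+|\mathbf k|)^{-10b}\,d\Theta<\infty$), then uses the Poisson identity and the exponential off-diagonal decay at all large scales to show $Z_{\mathbf 0}(\Theta)=0$ for a.e.\ $\Theta$, and only then integrates in $\Theta$ to conclude $\widehat\Psi\equiv0$. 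You would need an analogue of this mechanism in your continuum picture, and ``testing against characteristic functions of small cubes'' does not supply it.

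A further point your sketch does not surface: the LDT must hold on a \emph{non-compact} $\Theta$-set, and the divisors $\sum_i(\Theta_i+\mathbf k_i\cdot\vec\omega_i)^2-\mathcal E$ can be small even when $|\Theta|$ and $|\mathcal E|$ are huge. The paper's key structural move (explained in the introduction and implemented in Proposition~\ref{inism}) is that the first MSA step is perturbative and \emph{uniform} in $E$, and the quantitative window $|E|\le|\log\varepsilon|^{1/(2c_1)}$ is introduced exactly at the second step, so that for scales $N$ of interest one can confine $\Theta$ to a ball of radius $\sim N^2$ (see~\eqref{nres}) and then eliminate $(E,\Theta)$ over a compact set. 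Your proposal fixes the window $\mathcal L$ at the start but does not explain how the Schur-complement hierarchy stays inside it as the scales grow; this is not automatic and is where the exponent $c_1<c_\star(b,d)$ is actually earned.

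In short: your approach is a legitimate alternative strategy, closer in spirit to \cite{KS13,KS19} than to the paper, but as written it is a plan rather than a proof, with the central resonance-geometry and the $L^2\Rightarrow g=0$ steps left open. The paper's Aubry-duality reduction plus lattice MSA plus semi-algebraic frequency elimination is precisely what lets it close those gaps for arbitrary $d$.
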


\begin{rem}\label{rm1}
\begin{itemize}
\item[(1)] Theorem \ref{mthm1} is non-vacuous, i.e., there exists a  portion of the spectrum in $\mathcal{I}=\left[-K^2(\log\frac{K^2}{\lambda})^{\frac{1}{2c_1}}, K^2(\log\frac{K^2}{\lambda})^{\frac{1}{2c_1}}\right]$. Indeed (see Lemma \ref{spt} in the Appendix), one can show for any $E\geq 0$ and $M\geq\lambda |V|_{\max}$, $\sigma(H(\vec\theta))\cap [E-M,E+M]\neq \emptyset,$
where $\sigma(\cdot)$ denotes the spectrum. In particular, $\sigma(H(\vec\theta))\cap [-M, M]\neq \emptyset$ for any $M>\lambda |V|_{\max}$. Moreover, it is obvious that $\mathcal{I}\nearrow \R$ as $\lambda\to 0$ or $K\to\infty$.

\item[(2)] We make no \textit{smallness} assumptions on $\lambda>0$,  and prove the first \textit{purely continuous} spectrum result in \textit{low energy region} for quasi-periodic operators on $\R^d$ for \textit{arbitrary} $d\geq 1$. This can  be regarded as a supplement of results of \cite{KS19}, where the existence of \textit{ac} spectrum for a $2D$ quasi-periodic Schr\"odinger operator was established in \textit{high energy region}.
\item[(3)]
The theorem suggests that the length of frequencies is relevant for the spectral properties of continuous quasi-periodic Schr\"odinger operators.
\item[(4)] 
If $d=1, b_1>1$ and $V$ satisfies some conditions,  it was proved in \cite{BKV17} (combined with results of \cite{Bje06}) that  for $\lambda\geq \lambda_0(K\vec\omega,b_1)>0$ the spectrum in $[0, \lambda^{2/3}]$ is pure point with exponential decay eigenfunctions (i.e. AL) for a.e. Diophantine $\vec\omega$. By contrast, our Theorem \ref{mthm1} implies that  there are no eigenvalues in $[0, C\lambda]$ at all for any $\lambda>0$ if $K\geq C\sqrt{\lambda}$. Those motivate us to provide the new phase transition parameter $\frac{\log K}{\log \lambda}$ from \textit{pure point} to \textit{purely continuous} spectrum in \textit{low energy region}. Precisely,  there may exist a competition between $\lambda$ and $K$: $\frac{\log K}{\log \lambda}>C>0$ implies \textit{purely continuous} spectrum; and $\frac{\log K}{\log \lambda}\leq C$ may show \textit{pure point} spectrum (even AL). We refer to \cite{YZ14} for the phase transition parameter involving the coupling and energy: $\frac{\log E}{\log \lambda}$.
\item[(5)] Our results can be extended to the case $H=(-\Delta)^l+\lambda V(\vec\theta+K{x\vec\omega})$, where $l\in \Z, l\geq 1$ and $V(\cdot)$ is Gevrey regular (see \cite{ShiG}).
\item[(6)] It should be true that the spectrum in $\mathcal{I}$ is purely absolutely continuous for $0<\frac{\lambda}{K^2}\ll1$ and for most $\vec\omega$. However, we think  Anderson localization occurs at low energy   for  $\frac{\lambda}{K^2}\gg1$  and for most $\vec\omega$.
\end{itemize}
\end{rem}


 We outline the proof of Theorem \ref{mthm1}:

 The main scheme is to perform  MSA in momentum space $\Theta\in\R^d$.
 Let $E$ be an eigenvalue of $ H(\vec\theta)$, i.e. $ H(\vec\theta)\Psi=E\Psi$ for some $0\neq\Psi\in L^2(\R^d)$.
Let
 \begin{align*}
 \mathbf{k}\vec\omega=(\mathbf{k}_1\cdot\vec\omega_1,\cdots,\mathbf{k}_d\cdot\vec\omega_d).
 \end{align*}
Denote by $\widehat\Psi$ the Fourier transformation of $\Psi$. Then by direct computations (see Lemma \ref{shnol} for details), the vector $$Z=\left\{Z_\mathbf{k}=e^{-\mathbf{i}\mathbf{k}\cdot\vec\theta}\widehat\Psi(\Theta'+K{\mathbf{k}\vec\omega})\right\}_{\mathbf{k}\in\Z^{b}}$$ will satisfy $h(\Theta)Z=\frac{E}{K^2}Z$, where
 \begin{align*}
(h(\Theta)Z)_\mathbf{k}=\sum_{\mathbf{k}'\in\Z^{b}}\varepsilon \widehat{V}_{\mathbf{k}-\mathbf{k}'}Z_{\mathbf{k}'}+\sum_{i=1}^d(\Theta_i+\mathbf{k}_i\cdot\vec\omega_i)^2Z_\mathbf{k}, \varepsilon=\frac{\lambda}{K^2}, \Theta=\frac{\Theta'}{K}.
 \end{align*}
 In addition, we can even show $Z_\mathbf{k}$ grows at most polynomially in $\mathbf{k}$ for a.e. $\Theta$. Thus the asymptotic property of  $Z_\mathbf{k}$ can be controlled  by \textit{ fast} off-diagonal decaying  properties of the  Green's functions (if exist)
 \begin{align*}
 G_\Lambda(E;\Theta)=\left(R_\Lambda (h(\Theta)-E)R_\Lambda\right)^{-1}, \Lambda\subset\Z^{b}.
 \end{align*}

 At this stage, it suffices to get certain off-diagonal decaying estimates on $G_{\Lambda}(E;\Theta)$  as $|\Lambda|\to\infty$. For this purpose, it needs to make quantitative restrictions on $\vec\omega$,  $\Theta$  and even on $E$  due to the {small-divisors difficulty}. Usually, the KAM and MSA  methods are  powerful  to overcome the small-divisors.   In continuous quasi-periodic operators case, Surace \cite{Sur90} performed the first MSA scheme \footnote{The main scheme is similar that of \cite{FSW90}.} in momentum space $\Theta\in \R$ but restricted to diagonal part  of the form
 \begin{align*}
(\Theta+k_1+k_2\omega)^2,\  (k_1,k_2)\in\Z^2.
 \end{align*}
 Because of this special structure, Surace can even obtain uniform in $E\in\R$ estimate. However, the proof of \cite{Sur90} depends heavily on eigenvalue variations and is hard to work for $\Theta$ in higher dimensions.   For recent $\Theta\in\R$ type MSA, we also refer to \cite{DG14} in dealing with the inverse spectral theory for quasi-periodic operators on $\R$.

 For general $\Theta\in \R^d$ in momentum space the only known  MSA is developed in recent  papers \cite{KS13,KS19} for $d=2$. However, for \textit{arbitrary} $d\geq1$ similar type of questions has been extensively studied in case $\Theta\in\T^d$  \cite{BGS02,Bou07,BK19,JLS20,ShiG} via  techniques of semi-algebraic geometry arguments and subharmonic estimates. As compared with compact momentum case (i.e. $\Theta\in\T^d$), there comes new difficulty for general $\Theta\in\R^d$:  For any finite $\Lambda\subset\Z^{b}$,
 \begin{align}\label{smd12}
\sup_{\mathbf{k}\in \Lambda}\left|\sum_{i=1}^d(\Theta_i+\mathbf{k}_i\cdot\vec\omega_i)^2-E\right|\ll1
 \end{align}
 may frequently  occur  while $|\Theta|\gg1$ and  $|E|\gg1$. To avoid this difficulty, one may make the assumption that for some $C>0$ \footnote{This assumption is 	essentially satisfied in  \cite{BGS02,Bou07,BK19,JLS20,ShiG}.},
\begin{align}\label{bde}
-C\leq E\leq C.
\end{align}
Once \eqref{bde} is satisfied,  $\Theta$ stays essentially in a compact set of size $\sim|\Lambda|$, and the small divisors presented in \eqref{smd12} could be handled via methods of \cite{Bou07,JLS20}. The elimination  frequencies argument of Bourgain \cite{Bou07} based on semi-algebraic geometry theory is likely applicable. Consequently, we may expect that $ H(\vec\theta)$ has no eigenvalues in interval $[-CK^2, CK^2]$ if $0<\varepsilon\leq \varepsilon_0(C)$.

It is reasonable  that the absence of eigenvalues for $ H(\vec\theta)$ should hold in a \textit{much longer} energy interval,  which of  course needs a proof with new ideas. In this paper we will provide such a proof based on the  following observations:
\begin{itemize}
\item The first step of MSA is in general based on perturbative argument,  in particular the Neumann series expansions. It is important that the large deviation estimate (LDE) for Green's functions is valid for \textit{every} $E\in\R$ in this step.
\item Once the first step of MSA is finished, we will deal with the second iteration step. To propagate the LDE, some restrictions on $\vec\omega$ are necessary, which is essentially necessary even in $\Theta\in\T^d$ case. What's new here is that we can  make \textit{quantitative} restrictions on $E$ in this step. More precisely, the first step MSA implies that LDE holds in scales interval $N_0\leq N\leq |\log\varepsilon|^C$. The second MSA step will start with $N=|\log \varepsilon|^C$. To avoid \eqref{smd12}, we can actually set $|E|\leq |\log \varepsilon|^C$.  In this energy interval, we could perform the iteration similar to that of \cite{Bou07,JLS20} together with some technical improvements.
\item Since $|E|\leq |\log \varepsilon|^C$ is negligible as compared with the later general MSA scales $N\gg |\log \varepsilon|^C$,  the iterations  become similar to that done in the second step.
\end{itemize}
In conclusion, our new aspect here is that  we focus on the MSA (in the momentum space $\Theta\in\R^d$) in the first two  steps and make effective restrictions on $E$ in the second step.

Of course, the final aim is to show  $ H(\vec\theta)$ has no eigenvalues on $\R$, which seems difficult to handle via the present method.

The structure of the paper is as follows. Some preliminaries are introduced in \S2. The MSA in momentum space is established in \S3. In \S4, we finish the proof of Theorem \ref{mthm1}.  Some useful estimates are included in the Appendix.

\section{Preliminaries}

\subsection{Some notation}

For any $x\in\mathbb{R}^d$, let $|x|=\max\limits_{1\leq i\leq d}|x_i|$. For $\Lambda\subset\mathbb{R}^d$,  we introduce
$$\mathrm{diam}(\Lambda)=\sup_{n,n'\in \Lambda}|n-n'|, \ \mathrm{dist}(m,\Lambda)=\inf_{n\in \Lambda}|m-n|.$$


For $\Theta\in\R^d$ and $1\leq j\leq d$, let ${\Theta}_j^\neg=(\Theta_1,\cdots,\Theta_{j-1},\Theta_{j+1}\cdots,\Theta_d)\in\R^{d-1}$.

For $x\in\mathbb{R}^{d_1}$ and $\emptyset\neq X\subset\mathbb{R}^{d_1+d_2}$,  define the $x$-section of $X$ to be
$$X(x)=\{y\in\mathbb{R}^{d_2}:\  (x,y)\in X\}.$$ For example, $X({\Theta}_j^\neg)=\{\Theta_j\in\R:\ (\Theta_j,\Theta_j^\neg)\in X\}$ if $\emptyset \neq X\subset\R^d$.


Throughout this paper, we assume $\rho\in (0,1)$ for simplicity.

\subsection{Aubry duality}
It is easy to check that if
\begin{align*}
\Psi(x)=e^{\mathbf{i}\Theta\cdot x}\sum_{\mathbf{k}\in{\Z^{b}}}\Psi_\mathbf{k}e^{\mathbf{i} \mathbf{k}\cdot(\vec\theta+x\vec{\omega})}
\end{align*}
 is a solution of $(-\Delta+\varepsilon V(\vec\theta+x\vec\omega))\Psi(x)=E\Psi(x)$ (i.e. the Floquet-Bloch solution), then $\{\Psi_\mathbf{k}\}_{\mathbf{k}\in\Z^{b}}$ satisfies the following  equation
 \begin{align*}
 \sum_{\mathbf{k}'\in\Z^{b}}\varepsilon\widehat{V}_{\mathbf{k}-\mathbf{k}'}\Psi_{\mathbf{k}'}+\sum_{i=1}^d(\Theta_i+\mathbf{k}_i\cdot\vec\omega_i)^2\Psi_\mathbf{k}=E\Psi_\mathbf{k}.
 \end{align*}
This motivates us to study the following \textit{unbounded} quasi-periodic operators on $\Z^b$
\begin{align}\label{aubry}
(h(\Theta)Z)_\mathbf{k}= \sum_{\mathbf{k}'\in\Z^{b}}\varepsilon\widehat{V}_{\mathbf{k}-\mathbf{k}'}Z_{\mathbf{k}'}+\sum_{i=1}^d(\Theta_i+\mathbf{k}_i\cdot\vec\omega_i)^2Z_\mathbf{k}.
\end{align}
We call $h(\Theta)$ the Aubry duality  of $-\Delta+\varepsilon V(\vec\theta+x\vec\omega).$ For Aubry duality results in discrete case, we refer to \cite{BJ02,JK16,ShiG}.

\subsection{Green's functions and elementary regions}\label{gfer}
If $\Lambda\subset \Z^{b}$, denote ${h}_{\Lambda}(\Theta)=R_{\Lambda}{h(\Theta)}R_{\Lambda}$, where $R_{\Lambda}$ is the restriction operator and $h(\Theta)$ is given by \eqref{aubry}. Define the Green's function as
\begin{align*}
G_{\Lambda}(E;\Theta)=({h}_{\Lambda}(\Theta)-E+\mathbf{i}0)^{-1}.
\end{align*}

Denote by $Q_N$ an elementary region of size $N$ centered at $0$ (as in \cite{JLS20}):
\begin{align*}
  Q_N=[-N,N]^{b}\ {\rm or}\ Q_N=[-N,N]^{b}\setminus\{\mathbf{k}\in\mathbb{Z}^{b}: \ {k}_{ij}\in\varsigma_{ij}, \ 1\leq i\leq d, 1\leq j\leq b_i\},
\end{align*}
where  $ \varsigma_{ij}\in \{\{n<0\}, \{n>0\},\emptyset\}$ and at least two $ \varsigma_{ij}$  are not $\emptyset$.
Denote by $\mathcal{E}_N^{0}$ the set of all elementary regions of size $N$ centered at $0$. Let $\mathcal{E}_N$ be the set of all translates of  elementary regions:
$\mathcal{E}_N=\bigcup\limits_{\mathbf{k}\in\mathbb{Z}^{b},Q_N\in \mathcal{E}_N^{0}}\{\mathbf{k}+Q_N\}.$
\subsection{Semi-algebraic sets}
\begin{defn}[Chapter 9, \cite{BB}]
A set $\mathcal{S}\subset \mathbb{R}^n$ is called a semi-algebraic set if it is a finite union of sets defined by a finite number of polynomial equalities and inequalities. More precisely, let $\{P_1,\cdots,P_s\}\subset\mathbb{R}[x_1,\cdots,x_n]$ be a family of real polynomials whose degrees are bounded by $d$. A (closed) semi-algebraic set $\mathcal{S}$ is given by an expression
\begin{equation}\label{smd}
\mathcal{S}=\bigcup\limits_{j}\bigcap\limits_{\ell\in\mathcal{L}_j}\left\{x\in\mathbb{R}^n: \ P_{\ell}(x)\varsigma_{j\ell}0\right\},
\end{equation}
where $\mathcal{L}_j\subset\{1,\cdots,s\}$ and $\varsigma_{j\ell}\in\{\geq,\leq,=\}$. Then we say that $\mathcal{S}$ has degree at most $sd$. In fact, the degree of $\mathcal{S}$ which is denoted by $\deg(\mathcal{S})$, means the  smallest $sd$ over all representations as in $(\ref{smd})$.
\end{defn}

\begin{lem}[Tarski-Seidenberg Principle, \cite{BB}]\label{tsp}
Denote by $(x,y)\in\mathbb{R}^{d_1+d_2}$ the product variable. If $\mathcal{S}\subset\mathbb{R}^{d_1+d_2}$ is semi-algebraic of degree $B$, then its projections $\mathrm{Proj}_x\mathcal{S}\subset\mathbb{R}^{d_1}$ and
 $\mathrm{Proj}_y\mathcal{S}\subset\mathbb{R}^{d_2}$ are semi-algebraic of degree at most $B^{C}$, where $C=C(d_1,d_2)>0$.
\end{lem}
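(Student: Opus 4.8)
The plan is to deduce this from quantifier elimination, reducing to the elimination of a single variable and then tracking how degrees grow under that basic step. Since $\mathrm{Proj}_x\mathcal{S}$ (resp.\ $\mathrm{Proj}_y\mathcal{S}$) is obtained by successively projecting out the $d_2$ coordinates of $y$ (resp.\ the $d_1$ coordinates of $x$), and since — as will be seen — one such step raises the degree from $B$ to at most $B^{c_0}$ for an absolute constant $c_0$, iterating at most $d_1+d_2$ times yields the bound $B^{C}$ with $C=C(d_1,d_2)$. So it suffices to treat $d_2=1$, i.e.\ the projection that forgets a single real variable, which I will call $t$.

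First I would put $\mathcal{S}\subset\mathbb{R}^{d_1}\times\mathbb{R}_t$ into the normal form $\mathcal{S}=\bigcup_j\bigcap_{\ell\in\mathcal{L}_j}\{(x,t):P_\ell(x,t)\,\varsigma_{j\ell}\,0\}$ with $P_1,\dots,P_s$ of degree $\le d$. For each fixed $x$, the real roots of the polynomials $t\mapsto P_\ell(x,t)$ cut $\mathbb{R}_t$ into finitely many points and open intervals on each of which every $P_\ell(x,\cdot)$ has constant sign; whether $x\in\mathrm{Proj}_x\mathcal{S}$ depends only on the finite combinatorial datum recording, cell by cell, the sign vector $(\mathrm{sign}\,P_1(x,\cdot),\dots,\mathrm{sign}\,P_s(x,\cdot))$ together with which $\mathcal{L}_j$ has all its conditions satisfied on that cell. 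The number of such combinatorial types is bounded in terms of $s$ and $d$ only. The key classical fact I would invoke is that \emph{for each prescribed type, the set of $x$ realizing it is itself semi-algebraic}: the number of real roots of $t\mapsto P_\ell(x,t)$, their relative order, and the signs of the remaining $P_{\ell'}$ at and between them are all expressible by sign conditions on the coefficients of the $P_\ell$, their $t$-derivatives, and their pairwise subresultant polynomials (equivalently, via Sturm--Sylvester sequences and Thom's lemma). Taking the union over the types that meet the membership condition exhibits $\mathrm{Proj}_x\mathcal{S}$ again in normal form, and $\mathrm{Proj}_y\mathcal{S}$ is handled symmetrically.

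The degree bookkeeping is then routine: the subresultants of two polynomials in $d_1+1$ variables of degree $\le d$ again have degree $O(d^2)$ in the surviving variables, and $O(s^2)$ of them suffice, so the new defining family has parameters $s',d'$ with $s'd'\le (sd)^{c_0}$ for an absolute $c_0$; this is the claimed single-step bound, and $\max(d_1,d_2)$ iterations give $C=C(d_1,d_2)$. The only genuinely nontrivial ingredient is the emphasized statement above — that a prescribed root/sign arrangement of a family of univariate polynomials is cut out by semi-algebraic conditions on their coefficients with controlled degree — which is precisely the effective content of the theory of subresultant sequences and is where all the work sits. Rather than re-derive it I would simply cite \cite{BB}; everything else is the reduction and the degree count sketched here.
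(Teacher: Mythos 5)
The paper offers no proof of this lemma; it is a black-box import from \cite{BB}, so there is no in-text argument to compare yours against. Your sketch is a correct outline of the standard effective quantifier-elimination argument behind the cited result: reduce to eliminating a single real variable, observe that membership in the projection is determined by the sign arrangement of the $P_\ell(x,\cdot)$ on the cells cut out by their real roots, express each arrangement by sign conditions on subresultant (Sturm--Sylvester) polynomials in $x$ together with Thom's lemma, and track degrees. The degree count has the right shape: one elimination step takes $sd$ to at most $(sd)^{c_0}$ with $c_0$ absolute (the subresultants have degree $O(d^2)$ in the surviving variables and there are $O(s^2d)$ of them, independently of how many surviving variables remain), so at most $\max(d_1,d_2)$ iterations give $C=c_0^{\max(d_1,d_2)}=C(d_1,d_2)$. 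The one place your sketch is silent where a careful proof must work is delineability: the number and multiplicities of the real roots of $t\mapsto P_\ell(x,t)$, and whether the leading $t$-coefficient vanishes, vary with $x$, and one must first stratify the $x$-space into semi-algebraic pieces on which these data are constant before the ``combinatorial type'' argument is well posed. This is standard cylindrical-decomposition bookkeeping, it does not change the degree bound, and it is exactly the content you defer to \cite{BB}; so the net effect of your proposal matches the paper's treatment.
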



\begin{lem}[\cite{Bou07}]\label{proj}
 Let $\mathcal{S}\subset[0,1]^{d=d_1+d_2}$ be a semi-algebraic set of degree $\deg(\mathcal{S})=B$ and $\mathrm{mes}_d(\mathcal{S})\leq\eta$, where
$\log B\ll \log\frac{1}{\eta}.$

Denote by $(x_1,x_2)\in[0,1]^{d_1}\times[0,1]^{d_2}$ the product variable. Suppose
$ \eta^{\frac{1}{d}}\leq\epsilon.$
Then there is a decomposition of $\mathcal{S}$ as
\begin{align*}
\mathcal{S}=\mathcal{S}_1\cup\mathcal{S}_2
\end{align*}
with the following properties. The projection of $\mathcal{S}_1$ on $[0,1]^{d_1}$ has small measure
$$\mathrm{mes}_{d_1}(\mathrm{Proj}_{x_1}\mathcal{S}_1)\leq {B}^{C(d)}\epsilon,$$
and $\mathcal{S}_2$ has the transversality property
\begin{align*}
\mathrm{mes}_{d_2}(\mathcal{L}\cap \mathcal{S}_2)\leq B^{C(d)}\epsilon^{-1}\eta^{\frac{1}{d}},
\end{align*}
where $\mathcal{L}$ is any $d_2$-dimensional hyperplane in $\R^d$ s.t.,
$\max\limits_{1\leq j\leq d_1}|\mathrm{Proj}_\mathcal{L}(e_j)|<{\epsilon},$
where we denote by $e_1,\cdots,e_{d_1}$ the $x_1$-coordinate vectors.
\end{lem}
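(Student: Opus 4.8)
The plan is to obtain the decomposition $\mathcal{S}=\mathcal{S}_1\cup\mathcal{S}_2$ by a slicing argument over the product structure $[0,1]^{d_1}\times[0,1]^{d_2}$, controlling the ``bad'' directions via the Tarski--Seidenberg projection bound and controlling the transversal slices via a one-variable count on semi-algebraic sets. First I would recall the standard one-variable fact (a consequence of B\'ezout/the structure theorem for semi-algebraic sets in $\R^1$): a semi-algebraic subset of $[0,1]$ of degree $B$ is a union of at most $CB$ intervals and points, so any semi-algebraic set in $\R^d$ of degree $B$, when intersected with a line, either contains a segment or meets it in at most $CB$ points; combined with a Fubini-type integration this gives, for a semi-algebraic $\mathcal{S}\subset[0,1]^d$ of degree $B$ with $\mathrm{mes}_d(\mathcal{S})\le\eta$, that the set of $x_1\in[0,1]^{d_1}$ whose fiber $\mathcal{S}(x_1)\subset[0,1]^{d_2}$ has $\mathrm{mes}_{d_2}$-measure $\ge\eta^{1/d}$ has $\mathrm{mes}_{d_1}$-measure $\le\eta^{1-1/d}\le \eta^{(d-1)/d}$, which is $\le B^{C(d)}\epsilon$ under the hypotheses $\eta^{1/d}\le\epsilon$ and $\log B\ll\log\frac1\eta$.

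Next I would set $\mathcal{S}_1$ to be the union of those ``thick fibers'': let $T=\{x_1\in[0,1]^{d_1}:\mathrm{mes}_{d_2}(\mathcal{S}(x_1))> \eta^{1/d}\}$, which by Tarski--Seidenberg (Lemma \ref{tsp}) is semi-algebraic of degree at most $B^{C(d)}$, and put $\mathcal{S}_1=\mathcal{S}\cap(T\times[0,1]^{d_2})$, $\mathcal{S}_2=\mathcal{S}\setminus\mathcal{S}_1$. Then $\mathrm{Proj}_{x_1}\mathcal{S}_1\subset T$ and by the measure bound of the previous paragraph $\mathrm{mes}_{d_1}(\mathrm{Proj}_{x_1}\mathcal{S}_1)\le\mathrm{mes}_{d_1}(T)\le B^{C(d)}\epsilon$, which is the first asserted property (after adjusting $C(d)$). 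By construction every fiber $\mathcal{S}_2(x_1)$ has $\mathrm{mes}_{d_2}$-measure $\le\eta^{1/d}$.

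For the transversality property I would argue as follows. Fix a $d_2$-dimensional hyperplane $\mathcal{L}$ with $\max_{1\le j\le d_1}|\mathrm{Proj}_\mathcal{L}(e_j)|<\epsilon$. The smallness of the projections of the $x_1$-coordinate vectors onto $\mathcal{L}$ means that $\mathcal{L}$ is nearly parallel to the fiber direction $\{0\}^{d_1}\times\R^{d_2}$: projecting $\mathcal{L}$ onto the $x_1$-coordinate subspace is a linear isomorphism onto its image, and along $\mathcal{L}$ the $x_1$-coordinates vary by at most $C\epsilon$ times the $x_2$-coordinates. Hence $\mathcal{L}$ is the graph, over a region $U\subset[0,1]^{d_2}$ of the $x_2$-variables, of a map $x_2\mapsto \psi(x_2)$ with $\|D\psi\|\le C\epsilon$ (Lipschitz in the $x_2$-variable with small constant). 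Then $\mathrm{mes}_{d_2}(\mathcal{L}\cap\mathcal{S}_2)$ is, up to the $O(1)$ Jacobian factor $\sqrt{1+\|D\psi\|^2}\le 2$, the $\mathrm{mes}_{d_2}$-measure of $\{x_2\in U:(\psi(x_2),x_2)\in\mathcal{S}_2\}$; writing this by Fubini in the $x_1$-variables realized as $\psi(x_2)$ and using that the map is a small Lipschitz perturbation of the identity (so it changes $d_1$-dimensional measures of horizontal sets by a bounded factor) together with the fiber bound $\mathrm{mes}_{d_2}(\mathcal{S}_2(x_1))\le\eta^{1/d}$ uniformly, one gets $\mathrm{mes}_{d_2}(\mathcal{L}\cap\mathcal{S}_2)\le C\eta^{1/d}$. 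To upgrade the constant into the claimed $B^{C(d)}\epsilon^{-1}\eta^{1/d}$ I would, following Bourgain, instead cover $\mathcal{L}$ by $\lesssim(B/\epsilon)^{C(d)}$ affine pieces on which the slicing is uniform and $\mathcal{S}_2\cap\mathcal{L}$ is a semi-algebraic set of controlled degree in $d_2$ variables, apply the one-variable-type count in each coordinate to bound its measure by $B^{C(d)}\epsilon^{-1}\eta^{1/d}$, and sum.

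The main obstacle is the transversality estimate: one must convert the analytic ``near-parallel'' condition on $\mathcal{L}$ into a genuine measure bound on $\mathcal{L}\cap\mathcal{S}_2$ that is uniform over all such $\mathcal{L}$, and the only leverage is the semi-algebraic degree bound plus the fiberwise measure control of $\mathcal{S}_2$. The delicate point is bookkeeping the degree: $\mathcal{S}_2$ is $\mathcal{S}$ minus a set cut out by the Tarski--Seidenberg projection $T$, so $\deg(\mathcal{S}_2)\le B^{C(d)}$, and the intersection with $\mathcal{L}$ (an affine slice) keeps the degree at most $B^{C(d)}$; then the one-dimensional structure theorem must be applied carefully in a coordinate system adapted to $\mathcal{L}$ to extract the factor $\epsilon^{-1}$ without losing the $\eta^{1/d}$. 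Everything else is soft (Fubini, the elementary geometry of nearly-parallel hyperplanes, and the hypothesis $\eta^{1/d}\le\epsilon$, $\log B\ll\log\frac1\eta$ absorbing constants).
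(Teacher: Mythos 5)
The paper does not prove Lemma \ref{proj}; it cites it directly from \cite{Bou07} (and uses it as a black box inside the proof of Lemma \ref{rmf}), so there is no in-paper argument against which to check yours. Assessing the proposal on its own, the $\mathcal{S}_1$ half is fine: $T=\{x_1:\mathrm{mes}_{d_2}(\mathcal{S}(x_1))>\eta^{1/d}\}$ is semi-algebraic of degree $B^{C(d)}$ by Tarski--Seidenberg, Chebyshev gives $\mathrm{mes}_{d_1}(T)\le\eta^{(d-1)/d}\le\epsilon^{d-1}\le B^{C(d)}\epsilon$, and $\mathrm{Proj}_{x_1}\mathcal{S}_1\subset T$. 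The problem is the transversality half. The bound $\mathrm{mes}_{d_2}(\mathcal{L}\cap\mathcal{S}_2)\le C\eta^{1/d}$ that you derive from ``Fubini in the $x_1$-variables realized as $\psi(x_2)$'' is false for your choice of $\mathcal{S}_2$: the set $\{x_2:(\psi(x_2),x_2)\in\mathcal{S}_2\}$ sweeps through a continuum of $x_1$-fibers as $x_2$ varies, so the uniform thin-fiber bound on $\mathcal{S}_2(x_1)$ does not control it, and the extra $\epsilon^{-1}$ in the statement is not bookkeeping but the whole point.

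A concrete obstruction, with $d_1=d_2=1$, $d=2$, $\epsilon=\eta^{1/4}$ (so $\eta^{1/d}=\eta^{1/2}\le\epsilon$): take $\mathcal{S}=\{(x_1,x_2)\in[0,1]^2:\ |x_2-x_1/\epsilon|<w/2\}$ with $w=\eta/\epsilon=\eta^{3/4}$. Then $\deg(\mathcal{S})=O(1)$, $\mathrm{mes}(\mathcal{S})\approx\eta$, and every $x_1$-fiber has measure $\le w=\eta^{3/4}<\eta^{1/2}$, so your $T$ is empty and your decomposition forces $\mathcal{S}_2=\mathcal{S}$. Taking $\mathcal{L}=\{(\epsilon t,t):t\in\R\}$ (which satisfies $|\mathrm{Proj}_\mathcal{L}(e_1)|=\epsilon/\sqrt{1+\epsilon^2}<\epsilon$) gives $\mathrm{mes}_{d_2}(\mathcal{L}\cap\mathcal{S}_2)\gtrsim 1$, while $B^{C}\epsilon^{-1}\eta^{1/2}=O(\eta^{1/4})\to0$. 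So your $\mathcal{S}_2$ violates the conclusion; the lemma survives only because this strip has $\mathrm{mes}_{d_1}(\mathrm{Proj}_{x_1}\mathcal{S})\approx\epsilon$ and therefore should be assigned to $\mathcal{S}_1$. This shows the decomposition cannot be made fiberwise by measure; it has to be made according to the \emph{direction} (tangent/slope) of the pieces of $\mathcal{S}$, which is what Bourgain's argument does via a Yomdin--Gromov type $C^1$ cell decomposition of $\mathcal{S}$ into $B^{C}$ pieces and a case split on whether the tangent plane of a piece is $\epsilon$-close to the $x_2$-coordinate subspace (such pieces have small $x_1$-projection and go to $\mathcal{S}_1$) or not (such pieces meet any near-vertical $\mathcal{L}$ transversally and go to $\mathcal{S}_2$). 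Your closing remark about covering $\mathcal{L}$ by $(B/\epsilon)^{C}$ affine pieces does not rescue the argument: in the example there is a single cell, its intersection with $\mathcal{L}$ has measure of order one, and no one-variable count on a semi-algebraic subset of $\mathcal{L}$ of degree $B^C$ can see the smallness of $\eta$ without first knowing the cell is transversal to $\mathcal{L}$.
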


\section{LDT for Green's functions}

The main result of this section is the following LDT for Green's functions.
\begin{thm}[LDT]\label{ldt}
There exists $c_{\star}(b,d)>0$ such that the following holds: For any $0<c_1\leq c_{\star}$, there exists $N_0=N_0(c_1,\rho,b,d)>0$  such that if
\begin{align*}
\log\log\frac{1}{\varepsilon}\geq N_0,
\end{align*}
then for all $N\geq N_0$ we have
\begin{itemize}
  \item[(i)] There is some semi-algebraic set  $\Omega_N=\Omega_N(c_1,\varepsilon,\rho,b,d) \subset [0,2\pi]^{b}$ with  $\deg(\Omega_N)\leq N^{4d}$, and as $\varepsilon \to 0$,
  \begin{align*}
    \mathrm{mes}\left([0,2\pi]^{b}\backslash \bigcap\limits_{N\geq {N}_0} \Omega_N\right) \to 0.
  \end{align*}
  \item[(ii)]
  If  $\vec\omega\in\Omega_N$ and $E\in\mathbb{R}$ satisfying $|E|\leq|\log\varepsilon|^{\frac{1}{2c_1}}$, then there exists some  set $\mathbf{X}_N=\mathbf{X}_N(\vec\omega,E)\subset \mathbb{R}^d$  such that
 \begin{align*}\sup_{1\leq j\leq d,\Theta_j^\neg\in \R^{d-1}}\mathrm{mes}(\mathbf{X}_N(\Theta_j^\neg))\leq e^{-N^{c_1}},\end{align*}
and for  $\Theta\notin \mathbf{X}_N$, $Q\in\mathcal{E}_N^{0}$,
\begin{align*}
 \|G_{Q}(E;\Theta)\|&\leq e^{\sqrt{N}},\\
|G_{Q}(E;\Theta)(\mathbf{n},\mathbf{n}')|&\leq  e^{-\frac{\rho}{10}|\mathbf{n}-\mathbf{n}'|}\  {\mathrm{for} \ |\mathbf{n}-\mathbf{n}'|\geq {N}/{10}}.
\end{align*}
\end{itemize}
\end{thm}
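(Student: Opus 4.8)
The plan is to run a two-stage multi-scale analysis in momentum space, exactly tracking the quantitative restriction on $E$ announced in the outline.

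\textbf{Stage 1: the perturbative initial scale.} First I would use the Neumann series. For any $Q\in\mathcal{E}_N^0$ with $N$ in the range $N_0\le N\le|\log\varepsilon|^{C}$, write $h_Q(\Theta)=D_Q(\Theta)+\varepsilon T_Q$, where $D_Q$ is the diagonal $\sum_i(\Theta_i+\mathbf{k}_i\cdot\vec\omega_i)^2$ and $T_Q$ is the Toeplitz convolution by $\widehat V$, whose matrix elements decay like $e^{-\rho|\mathbf{n}-\mathbf{n}'|}$ by \eqref{gvc}. The key point here — and the reason this step works for \emph{every} $E\in\R$ — is a resolvent/distance argument: one removes from $\Theta$-space the set $\mathbf{X}_N$ where some site $\mathbf{k}\in Q$ is ``resonant'', i.e. $|D_Q(\Theta)_{\mathbf{k}\mathbf{k}}-E|<\kappa$ for a suitable $\kappa=\kappa(N)$. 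Because $D_Q(\Theta)_{\mathbf{k}\mathbf{k}}-E=\sum_i(\Theta_i+\mathbf{k}_i\cdot\vec\omega_i)^2-E$ is, in each variable $\Theta_j$ separately, a quadratic polynomial with leading coefficient $1$, the sublevel set $\{|D_Q(\Theta)_{\mathbf{k}\mathbf{k}}-E|<\kappa\}$ has one-dimensional sections of measure $\lesssim\sqrt\kappa$; summing over $\mathbf{k}\in Q$ (at most $(2N+1)^b$ sites) gives a bad set of sectional measure $\lesssim N^b\sqrt\kappa$, which we want $\le e^{-N^{c_1}}$, so take $\kappa\sim e^{-3N^{c_1}}$. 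Off this bad set $\|D_Q(\Theta)-E\|$ is invertible with norm $\le\kappa^{-1}$, and since $\varepsilon\|T_Q\|\lesssim\varepsilon\ll\kappa$ (this is exactly where $\log\log(1/\varepsilon)\ge N_0$ and $N\le|\log\varepsilon|^{C}$ enter — we need $\varepsilon\kappa^{-1}$ to beat everything), the Neumann series converges and gives $\|G_Q\|\le 2\kappa^{-1}\le e^{\sqrt N}$ together with the off-diagonal bound $|G_Q(\mathbf n,\mathbf n')|\le e^{-\rho|\mathbf n-\mathbf n'|/2}\le e^{-\rho|\mathbf n-\mathbf n'|/10}$ by the standard geometric-series resummation of $\varepsilon T_Q$ against the decaying resolvent. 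The set $\Omega_N$ here is essentially all of $[0,2\pi]^b$ at this stage (no frequency restriction is yet needed), trivially semi-algebraic of low degree.

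\textbf{Stage 2: propagating to larger scales with the restriction $|E|\le|\log\varepsilon|^{1/2c_1}$.} For $N>|\log\varepsilon|^{C}$ I would run the usual inductive step: cover $Q\in\mathcal{E}_N^0$ by translates of elementary regions of the previous scale $N'$ (with $N\sim (N')^{C}$, say), call a subcube ``good'' if the previous-scale LDT applies, i.e. $\Theta$ (restricted to that subcube) avoids the translated bad set $\mathbf{X}_{N'}$. The combinatorial/resolvent-expansion lemma (paving / Schur complement) then yields the LDT at scale $N$ \emph{provided} the good subcubes are sufficiently dense, and the failure set $\mathbf{X}_N$ is controlled by the measure of the union of translated $\mathbf{X}_{N'}$'s intersected with $Q$, which requires eliminating frequencies $\vec\omega$ for which too many translates line up. This is where semi-algebraic geometry — Lemma~\ref{tsp} and Lemma~\ref{proj} — does the work: $\mathbf{X}_{N'}$, being defined by polynomial inequalities of degree $\le (N')^{4d}$ in $(\Theta,\vec\omega,E)$, has sectional measure $\le e^{-(N')^{c_1}}=:\eta$, and applying Lemma~\ref{proj} with the transversality direction being the $\Theta$-line while projecting out $\vec\omega$ produces the new $\Omega_N$ (degree $\le N^{4d}$) such that for $\vec\omega\in\Omega_N$ the sections of $\mathbf{X}_N$ have measure $\le e^{-N^{c_1}}$; one checks the hypothesis $\eta^{1/d}\le\epsilon$ with $\epsilon$ chosen so that $B^{C(d)}\epsilon\le$ (the target), which is feasible because $e^{-(N')^{c_1}}$ beats any power of $N'$. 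Crucially, the hypothesis $|E|\le|\log\varepsilon|^{1/2c_1}$ guarantees that in the range $N\ge N_0$ (so $N\gtrsim\log\log(1/\varepsilon)$, hence $N^{c_1}$ eventually dominates $\log|E|$ and $(\log(1/\varepsilon))^{?}$-type terms) the diagonal cannot make the whole block resonant: $\{\mathbf k\in Q:\ |D_Q(\Theta)_{\mathbf k\mathbf k}-E|< e^{N^{c_1/2}}\}$ still stays inside a region of diameter $O(N)$ in $\Theta$, so \eqref{smd12} is quantitatively excluded and the elimination argument is not vacuous; this replaces the a priori bound \eqref{bde} used in the compact-momentum literature by the sharper, $\varepsilon$-dependent window.

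\textbf{Expected main obstacle.} The routine parts are the Neumann series and the Schur-complement paving; the delicate part is the bookkeeping that makes the two stages mesh — i.e. verifying that when Stage 2 ``starts with $N\sim|\log\varepsilon|^{C}$'' the input LDT from Stage 1 has small-enough bad set and large-enough spectral gap to feed Lemma~\ref{proj}, and simultaneously that the energy window $|E|\le|\log\varepsilon|^{1/2c_1}$ is consistent with $|E|\le|\log\varepsilon|^{C}$ used to close Stage 1 (so one needs $c_1$ small, which is where the constant $c_\star(b,d)$ is pinned down). I also expect care to be needed in keeping $\deg(\Omega_N)\le N^{4d}$: each inductive step composes a Tarski–Seidenberg projection (degree $\to B^{C}$) with the previous degree, so one must choose the scale recursion $N\mapsto N^{C}$ and the exponent $4d$ compatibly so the degree does not blow up super-polynomially — a standard but fiddly fixed-point-of-the-exponents computation. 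Finally, handling \emph{all} elementary regions $Q\in\mathcal{E}_N^0$ (not just cubes), including the ``notched'' ones, requires the covering/sub-elementary-region combinatorics of \cite{JLS20}, which I would invoke rather than redo.
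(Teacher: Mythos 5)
Your Stage~1 is essentially the paper's Proposition~\ref{inip}: remove a thin tube around the paraboloid $\sum_i(\Theta_i+\mathbf k_i\cdot\vec\omega_i)^2=E$, invert $D_Q-E$, and let the Neumann series in $\varepsilon T_Q$ converge; the observation that this is uniform in $E$ and requires no $\vec\omega$-restriction is exactly right. The decay rate you get ($\rho/2$ rather than the paper's $4\rho/5$) is weaker but still above the $\rho/10$ target after the $O(1/\sqrt{N_1})$ per-step losses, so that is only a quantitative discrepancy.

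The real gap is in Stage~2. You pass directly from ``most translated sub-blocks are good'' to the norm bound $\|G_Q\|\le e^{\sqrt N}$ via ``paving/Schur complement'', but a resolvent expansion alone does \emph{not} give a norm bound for $G_Q$: it only propagates the \emph{off-diagonal decay} once a norm bound is already available, because a small cluster of resonant sites inside $Q$ can still make $h_Q(\Theta)-E$ nearly singular. What is missing is the \emph{matrix-valued Cartan estimate} (the paper's Lemma~\ref{mcl}). The logic of the paper is: (a) Proposition~\ref{inism} — the semi-algebraic/elimination step, via Lemma~\ref{svl} (multi-variable elimination on unit cubes) and Lemma~\ref{rmf} (a rescaled variant of Bourgain's Lemma~1.20), not a single application of Lemma~\ref{proj} — produces, for each $(\Theta,E)$ with $|E|\le|\log\varepsilon|^{1/2c_1}$, an intermediate box $\Lambda$ in which all sites outside a tiny cluster $\bar\Lambda$ are $N_1$-good; (b) the Cartan lemma, applied to the one-parameter family $y\mapsto h_\Lambda((\Theta_j+y,\Theta_j^\neg))-E$, exploits analyticity to show that $\|G_\Lambda\|\le e^{\sqrt{\widetilde N}}$ off a set of $y$ of measure $e^{-\widetilde N^{1/3}}$, even though no pointwise control on the cluster $\bar\Lambda$ is available; (c) only \emph{then} do the paving lemmas (Lemmas~\ref{res1}, \ref{res2}) upgrade this to the full-scale $\|G_Q\|\le e^{\sqrt{N_\star}}$ with exponential off-diagonal decay. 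Without (b) the inductive step does not close: you would need to know a priori that $\bar\Lambda$ is non-resonant, which is precisely what you cannot assume. You should also be aware that the frequency elimination is genuinely more involved than invoking Lemma~\ref{proj} once — it requires an induction over subsets $I\subset\{1,\dots,d\}$ of coordinate directions with a carefully nested chain of thresholds $\underline C_1\ll\overline C_1\ll\cdots\ll\overline C_{2^{d+1}}$, and the avoidance of the unboundedness of $\Theta$ via the ``no-resonance at distance $\gtrsim N^2$'' observation \eqref{nres}, which is where the hypothesis $|E|\le|\log\varepsilon|^{1/2c_1}$ actually gets used.

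Your last paragraph correctly flags the genuine bookkeeping issues (meshing of scales, $\deg(\Omega_N)\le N^{4d}$, notched elementary regions), and your identification of \emph{why} the energy restriction enters — to keep $\Theta$ effectively compact for the semi-algebraic projection — matches the paper's logic. But as written the proposal would not prove the norm bound in (ii); insert the Cartan-estimate step between the elimination step and the paving step and the argument becomes the paper's.
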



\begin{proof}[\bf Proof of Theorem \ref{ldt}]

The proof is based on the multi-scale analysis scheme as in \cite{Bou07,JLS20}. However, we formulate it into three steps.

{\bf STEP 1: The First Step}

In this initial step we use a perturbative argument. It is important that in this step we obtain\textit{ uniform} measure estimates both on $E\in\R$ and $\vec\omega.$
\begin{lem}\label{lemini}
Given $\delta>0$, let 
\begin{align*}
\mathbf{X}_{N}=\bigcup_{|\mathbf{k}|\leq {N}}\left\{\Theta\in\R^d:\  \left|\sum_{i=1}^d(\Theta_i+\mathbf{k}_i\cdot\vec\omega_i)^2-E\right|<\delta\right\}.
\end{align*}
Then
\begin{align*}
\sup_{1\leq j\leq d,\ \Theta_j^\neg\in\R^{d-1}}\mathrm{mes}(\mathbf{X}_N(\Theta_j^\neg))\leq C(2N+1)^{b}\sqrt{\delta},
\end{align*}
where $C>0$ is an absolute constant.

Moreover, if
$$\varepsilon^{-1}\geq 2\delta^{-1}(2N+1)^{b},$$
then for any $\Theta\notin \mathbf{X}_{N}$ and $\Lambda\subset [-N,N]^{b}$,
\begin{align*}
\|G_{\Lambda}(E;\Theta)\|&\leq 2\delta^{-1}, \\
|G_{\Lambda}(E;\Theta)(\mathbf{n},\mathbf{n}')|&\leq 2\delta^{-1}e^{-{\rho}|\mathbf{n}-\mathbf{n}'|}.
\end{align*}
\end{lem}
\begin{proof}
The proof is perturbative and based on the Neumann series argument: The proof of measure bound is based on  Lemma 4.7 in \cite{Shi19}, and the estimate on Green's functions can be found in the proof of Theorem 4.3 in
\cite{JLS20}.
\end{proof}

If we take $C(2N+1)^{b}\sqrt{\delta}=e^{-N^{c_1}}$ in the above lemma, we obtain
\begin{prop}\label{inip}
Let $0<c_1<1/4$. Then there exists  $N_0=N_0(c_1,\rho,b,d)>0$ such that the following: If $N_0\leq N\leq |\log\varepsilon|^{\frac{1}{2c_1}}$, then for all $E\in\R$ and $\vec\omega\in[0,2\pi]^{b}$ there is some $\mathbf{X}_N=\mathbf{X}_N(E,\vec\omega)\subset\R^d$ such that
\begin{align}\label{imb}
\sup_{1\leq j\leq d, \ \Theta_j^\neg\in\R^{d-1}}\mathrm{mes}(\mathbf{X}_N(\Theta_j^\neg))\leq e^{-N^{c_1}},
\end{align}
and, if $\Theta\notin \mathbf{X}_{N}$ and $Q\in \mathcal{E}_N^0$,
\begin{align}
\label{numan1}
\|G_{Q}(E;\Theta)\|&\leq e^{\sqrt{N}}, \\
\label{numan}
|G_{Q}(E;\Theta)(\mathbf{n},\mathbf{n}')|&\leq e^{-\frac{4\rho}{5}|\mathbf{n}-\mathbf{n}'|}\ {\rm for}\ |\mathbf{n}-\mathbf{n}'|\geq N/10.
\end{align}
\end{prop}

\begin{proof}
Let $\mathbf{X}_N$ be given by Lemma \ref{lemini}. Then the measure bound \eqref{imb} can be derived from choosing $C(2N+1)^{b}\sqrt{\delta}=e^{-N^{c_1}}$, i.e.,
\begin{align}\label{delta}
\delta=C^{-2}(2N+1)^{-2b}e^{-2N^{c_1}}.
\end{align}

We then turn to the Green's function estimates. Let $\Theta\not\in\mathbf{X}_N.$
Since \eqref{delta},  the condition $\varepsilon^{-1}\geq \delta^{-1}(2N+1)^{b}$ is equivalent to
\begin{align*}
\varepsilon^{-1} \geq 2C^{2}(2N+1)^{3b}e^{2N^{c_1}}
\end{align*}
It suffices to let $N\geq N_0(c_1,b,d)>0$ and
\begin{align}\label{epsilon}
\varepsilon^{-1}\geq e^{{N}^{2c_1}}\geq2C^{2}(2N+1)^{3b}e^{2N^{c_1}},
\end{align}
that is $N_0(c_1,b,d)\leq N\leq |\log\varepsilon|^{\frac{1}{2c_1}}$.
From  \eqref{epsilon} and $0<c_1<1/4$, we have
$2\delta^{-1}\leq  e^{{N}^{2c_1}}\leq  e^{\sqrt{N}}$, which implies \eqref{numan1}.
 Finally, the exponential decay bound \eqref{numan} follows from: If $|\mathbf{n}-\mathbf{n}'|\geq N/10$ and $N\geq N_0(c_1,\rho,b,d)>0,$ then
\begin{align*}
|G_{Q}(E;\Theta)(\mathbf{n},\mathbf{n}')|\leq e^{\sqrt{N}-\frac{\rho}{50}N}e^{-\frac{4\rho}{5}|\mathbf{n}-\mathbf{n}'|}\leq e^{-\frac{4\rho}{5}|\mathbf{n}-\mathbf{n}'|}.
\end{align*}
\end{proof}

{\bf STEP 2: The Second Step}

In the second step we will propagate LDT from (scales) $\left[N_0,|\log\varepsilon|^{\frac{1}{2c_1}}\right]$ to
$$\left[|\log\varepsilon|^{\frac{1}{2c_1}}, e^{|\log\varepsilon|^{1/4}}\right].$$
For this purpose, further removal of $\vec\omega$ is necessary  and the semi-algebraic geometry arguments established by Bourgain \cite{Bou07} play a key role.  In what follows we always assume
\begin{align}\label{ebd}
E\in \mathcal{I}_\varepsilon= \left[-|\log \varepsilon|^{\frac{1}{2c_1}}, \log \varepsilon|^{\frac{1}{2c_1}}\right].
\end{align}

Denote $\mathbf{B}(N)=\{\Theta\in\R^d:\ |\Theta|\leq N\}$. Then we have
\begin{prop}\label{inism}
Let $|\log\varepsilon|^{\frac{1}{2c_1}}\leq N\leq e^{|\log\varepsilon|^{1/4}}$,  $N_1\sim (\log N)^{2/{c_1}}$. Then there exist constants
$$0<c_2(b,d)<c_3(b,d)<c_4(b,d)\ll1,$$
and semi-algebraic set $\Omega_N\subset[0,2\pi]^{b}$ satisfying
$$\deg(\Omega_N)\leq N^{4d},\ {\rm mes}([0,2\pi]^{b}\setminus\Omega_N)\leq N^{-c_2}$$
 such that the following holds: If $\log\log\frac{1}{\varepsilon}\geq N_0(c_1,b,d)>0$ and $\vec\omega\in\Omega_N$, then for all $(E,\Theta)\in \mathcal{I}_\varepsilon\times\mathbf{B}(100bN^2)$  there is some $\frac{1}{10}N^{c_3}<M<10N^{c_4}$
such that for all $\mathbf{k}\in [-M,M]^{b}\setminus[-M^{\frac{1}{10b}}, M^{\frac{1}{10b}}]^{b}$, one has $\Theta+\mathbf{k}\vec\omega\notin \mathbf{X}_{N_1},$ here $\mathbf{X}_{N_1}$ is given by Proposition \ref{inip}.
\end{prop}

\begin{rem}
The conditions $\log\log \frac{1}{\varepsilon}\geq N_0(c_1,b,d)>0$ and $N\leq e^{|\log\varepsilon|^{1/4}}$ can ensure that $N_0(c_1,\rho,b,d)\leq N_1\leq |\log\varepsilon|^{\frac{1}{2c_1}}$.
\end{rem}
\begin{proof}

We will eliminate the variables $(\Theta,E)\in \mathbf{B}(100bN^2)\times \mathcal{I}_\varepsilon$. This needs make \textit{quantitative} restrictions on $\vec\omega$.

 Let
\begin{align*}
{N}^{c_3}<L< {N}^{c_4},
\end{align*}
where $0<c_3<c_4<1$ will be specified later. Let $S_L$ be the set of all $(\vec\omega,\Theta, E)\in [0,2\pi]^{b}\times \mathbf{B}(100bN^2)\times \mathcal{I}_\varepsilon$ such that, for any $\mathbf{n}\in [-L,L]^{b}$ and $\vec\omega\in [0,2\pi]^{b},$  $\Theta+\mathbf{n} \vec\omega\in \mathbf{X}_{N_2}(E,\vec\omega).$ Applying Proposition \ref{inip}  gives
\begin{align}
\label{ldt5}&\deg(S_L)\leq L^{C(b,d)},\\
\label{ldt6}&\sup_{1\leq j\leq d,\Theta_j^\neg\in\mathbb{R}^{d-1}}\mathrm{mes}(\mathbb{R}\setminus  S_L(\Theta_{j}^\neg))\leq e^{-{N_1^{c_1}}/{2}}.
\end{align}
 Fix $I\subset\{1,\cdots,d\}$ and define $\mathcal{A}$ to be all $(\vec\omega,\Theta,y,E)\in {[0,2\pi]}^{b}\times \mathbf{B}(100bN^2)\times \mathbb{R}^{I}\times \mathcal{I}_\varepsilon$ satisfying
\begin{align}\label{ldt9}
(\vec\omega,(\Theta_j+y_j)_{j\in {I}},(\Theta_j)_{j\notin {I}}, E)\notin S_L.
\end{align}
Obviously, by (\ref{ldt5}) and (\ref{ldt9}),
\begin{align}\label{ldt10}\deg(\mathcal{A})\leq L^{C(b,d)}.
\end{align}
Fix $\vec\omega\in [0,2\pi]^{b}$ and consider
$$\mathcal{A}_1:=\mathcal{A}(\vec\omega)\subset\mathbf{B}(100bN^2)\times \mathbb{R}^{I}\times \mathcal{I}_\varepsilon.$$
Assume for $y_I=(y_i)_{i\in I}$, $y_I\notin\mathbf{B}_I(200bN^2)=\{y\in\R^{I}:\ |y|\leq 200bN^2\}$.  Then for all $\mathbf{n}\in[-L,L]^{b}+\mathcal{E}_{N_1}^0$, one has since $|\Theta|\leq 100bN^2$ and $|E|\leq |\log \varepsilon|^{\frac{1}{2c_1}}\leq N$,
\begin{align}\label{nres}
\nonumber\sum_{i=1}^d(\Theta_i+y_i+\mathbf{n}_i\cdot\vec\omega_i)^2-E&\geq\sum_{i\in {I}}(\Theta_i+y_i+\mathbf{n}_i\cdot\vec\omega_i)^2-E\\
&\geq N^4,
\end{align}
which shows that there is \textit{no resonances} in this case. In other words, we must have
\begin{align}
\mathcal{A}_1\subset\mathbf{B}(100bN^2)\times \mathbf{B}_I(200bN^2)\times \mathcal{I}_\varepsilon.
\end{align}

By (\ref{ldt10}) and Tarski-Seidenberg principle (i.e., Lemma \ref{tsp}), we obtain
\begin{align}\label{a1d}
\deg(\mathcal{A}_1)\leq L^{C(b,d)}.
\end{align}
From (\ref{ldt6}), for all $(\Theta,E)\in\mathbf{B}(100bN^2)\times\mathcal{I}_\varepsilon$, we have
\begin{align}\label{ldt11}
\mathrm{mes}_{I}(\mathcal{A}_{1}(\Theta,E))\leq \eta:=e^{-{N_1^{c_1}}/{2}}.
\end{align}

At this stage, we need a lemma for eliminating  multi-variable:
\begin{lem}\label{svl}
Let $\mathcal{S}\subset [0,1000bN^2]^{s+r}$ be a semi-algebraic set of degree $B$ and such that
\begin{align*}
\mathrm{mes}_s(\mathcal{S}(y))<\eta\ \mathrm{for}\ \forall\  y\in [0,1000bN^2]^r\ {\rm and }\ \log(BN)\ll\log{\frac{1}{\eta}}.
\end{align*}
Then the set
\begin{align*}
\left\{(x_1,\cdots,x_{2^r})\in [0,1000bN^2]^{s2^r}:\  \bigcap\limits_{1\leq i\leq 2^r}\mathcal{S}(x_i)\neq\emptyset\right\}
\end{align*}
is semi-algebraic of degree at most $B^{C}$ and measure at most
\begin{align*}
N^CB^{C}\eta^{s^{-r}2^{-r(r-1)/2}},
\end{align*}
where $C=C(s,r,b)>0$.
\end{lem}
\begin{proof}[\bf Proof of Lemma \ref{svl}]
The degree bound follows from Tarski-Seidenberg principle Lemma \ref{tsp}. The measure bound is derived as follows. We first divide $[0,1000bN^2]^{s+r}$ into about $N^{C(s,r,b)}$ many unit cubes. Then applying Lemma 1.18 of \cite{Bou07} gives measure bound $B^{C}\eta^{s^{-r}2^{-r(r-1)/2}}$ on each such unit cube. Finally, it suffices to take account of all those cubes.
\end{proof}
From \eqref{a1d} and (\ref{ldt11}),   we have by using   Lemma \ref{svl} (with $s=|I|$, $r=d+1, B=L^C$ and $\eta=e^{-N_1^{c_1}/2}$) that

$\mathcal{A}_{2}=\left\{(y^{(i)})_{1\leq i\leq 2^{d+1}}:\  \bigcap_{1\leq i\leq 2^{d+1}}\mathcal{A}_{1}(y^{(i)})\neq \emptyset\right\}$
 is a semi-algebraic set with
\begin{align}\label{ldt12}\deg(\mathcal{A}_{2})\leq L^{C(b,d, |I|)},\ \mathrm{mes}(\mathcal{A}_{2})\leq\eta_1:= {N}^{C(b,d,|I|)}\eta^{|{I}|^{-{d-1}}2^{-d(d+1)/2}}.\end{align}
Define
\begin{align*}\mathcal{B}&=\left\{\left(\vec\omega,(y^{(i)})_{1\leq i\leq 2^{d+1}}\right):\  \vec\omega\in [0,2\pi]^{b}, \bigcap_{1\leq i\leq 2^{d+1}}\mathcal{A}(\vec\omega,y^{(i)})\neq\emptyset \right\}\\
&\subset [0,2\pi]^{b}\times[0,200b{N}^2]^{|{I}|2^{d+1}}.
\end{align*}
By Lemma \ref{tsp}, $\deg(\mathcal{B})\leq L^{C(d,|I|)}$. Write ${\overline{{\omega}}}=(\vec\omega_i)_{i\notin{I}}$ and $\widetilde{\omega}:=(\vec\omega_i)_{i\in {I}}$.
Then by Fubini Theorem and  (\ref{ldt12}),
\begin{eqnarray}\label{ldtn}
\mathrm{mes}(\mathcal{B}(\overline{\omega}))\leq\eta_1,\ \deg(\mathcal{B}({\overline{\omega}}))\leq L^{C}.
\end{eqnarray}
Notice that for any $C_\star>1$,   one has  $\frac{1}{\eta_1}\gg{N}^{C}$ if $N\geq N_0(C_\star,c_1,b,d,|I|)>0$.
One considers the set $\mathcal{B}_1$ containing $\widetilde{\omega}$, which is defined by the following:
\textit{there is  some sequence} $\mathbf{n}^{(1)},\cdots,\mathbf{n}^{(2^{d+1})}\in \mathbb{Z}^{b_I} (b_I=\sum_{i\in I}b_i)$ \textit{satisfying}
{
\begin{eqnarray}
\label{stp2}&&L^{{\underline{C}}_{\alpha}}\leq\min_{i\in {I},1\leq j\leq b_i}|{n}^{(\alpha)}_{ij}|\leq |\mathbf{n}^{(\alpha)}|\leq L^{\overline{C}_{\alpha}} \ (1\leq \alpha\leq 2^{d+1}),
\end{eqnarray}
\textit{such that}
\begin{equation*}
(\widetilde{\omega},\mathbf{n}^{(1)}\widetilde{\omega} ,\cdots,\mathbf{n}^{(2^{d+1})}\widetilde{\omega})\in\mathcal{B}{(\overline{\omega})},
\end{equation*}
\textit{where}
$$1\ll \underline{C}_1\ll\overline{ C}_1\ll\cdots\ll \underline{C}_{2^{d+1}}\ll \overline{C}_{2^{d+1}}.$$
}

At this stage, we will introduce a key lemma for eliminating frequencies:
\begin{lem}\label{rmf}
Let $\mathcal{S}\subset \R^{sr}$ be a semi-algebraic set of degree $B$ and $\mathrm{mes}(\mathcal{S})<\eta$ with $\eta>0.$

Let $b(r)=\sum_{i=1}^rb_i,b_i\in\N.$ For $\vec\omega_i\in[0,2\pi]^{b_i}$,  $\vec\omega= (\vec\omega_1,\cdots,\vec\omega_r)\in[0,2\pi]^{b(r)}$ and $ \mathbf{n}=(\mathbf{n}_1,\cdots,\mathbf{n}_r)\in\mathbb{Z}^{b(r)} (\mathbf{n}_i\in\Z^{b_i})$, define
$$\mathbf{n}\vec\omega=(\mathbf{n}_1\cdot\vec\omega_1,\cdots, \mathbf{n}_r\cdot\vec\omega_r).$$

For any $C>1$, define  $\mathcal{N}_1,\cdots,\mathcal{N}_{s-1}\subset \mathbb{Z}^{b(r)}$ to be finite sets with the following property:
$$\min\limits_{1\leq i\leq r, 1\leq j\leq b_i}|{n}_{ij}|>(B\max\limits_{1\leq i\leq r}|\mathbf{m}_i|)^{C}, $$
where $\mathbf{n}\in \mathcal{N}_{l}, \mathbf{m}\in\mathcal{N}_{l-1}\  (2\leq l\leq s-1)$.

 Then there is some $C=C(r,s,b)>0$ such that for
$\max\limits_{\mathbf{n}\in\mathcal{N}_{s-1}}|\mathbf{n}|^C<\frac{1}{\eta},$
one has
\begin{align*}
&\ \mathrm{mes}(\{\vec\omega\in [0,2\pi]^{b(r)}:\  \exists \ \mathbf{n}^{(i)}\ \in\mathcal{N}_i\  s.t.,\  (\vec\omega,\mathbf{n}^{(1)}\vec\omega,\cdots,\mathbf{n}^{(s-1)}\vec\omega)\in \mathcal{S}\})\\
&\ \ \ \ \ \leq B^{C}(\min\limits_{\mathbf{n}\in\mathcal{N}_1}\min\limits_{ i,j}|{n}_{ij}|)^{-1}.
\end{align*}
\end{lem}

\begin{proof}[\bf Proof of Lemma \ref{rmf}]
The proof is similar to that of Lemma 1.20 in \cite{Bou07} by iterating  Lemma \ref{proj}. The main difference is that we allow $\mathcal{S}$ to vary in $\R^{sr}$ rather than in the unit cube. Bourgain's proof remains applicable since Lemma \ref{proj} permits re-scaling. Moreover,  at $i$-th ($i\leq s-1$) iteration step, the valid sets are essentially contained  in $[0, \max\limits_{\mathbf{n}\in\mathcal{N}_{s-i}}|\mathbf{n}|]^{r(s-i+1)}$. We omit the details here.

\end{proof}

Then  from (\ref{ldtn}) and  Lemma \ref{rmf} (set $r=|I|, s=2^{d+1}+1$), we have
\begin{eqnarray}\label{ldt14}
&& \mathrm{mes}(\mathcal{B}_1)\leq L^{-{\underline{C}_1}}L^{C}\leq L^{-3}.
\end{eqnarray}
Define  $$\mathcal{G}_L:=\bigcap\limits_{\emptyset\neq {I}\subset\{1,\cdots,d\}
}\left\{\vec\omega\in [0,2\pi]^{b}:\  \vec\omega=(\overline{\omega},\widetilde{\omega}),\  \widetilde{\omega}\notin\mathcal{B}_1\right\}.$$
Thus \textit{$\vec\omega\notin\mathcal{G}_{L}$ if and only if, there are  $\emptyset\neq{I}\subset \{1,\cdots,d\}$  and  some sequence $\mathbf{n}^{(1)},\cdots,\mathbf{n}^{(2^{d+1})}\in \mathbb{Z}^{b_I}$ satisfying \eqref{stp2} such that,
 $$(\vec\omega,\mathbf{n}^{(1)}\widetilde{\omega},\cdots, \mathbf{n}^{(2^{d+1})}\widetilde{\omega})\in \mathcal{B}.$$}
 This implies that $\mathcal{G}_{L}$ is a semi-algebraic set. Furthermore, by (\ref{stp2}) and (\ref{ldt14}),
 \begin{eqnarray*}\label{gl1}
\mathrm{mes}([0,2\pi]^{b}\setminus\mathcal{G}_{L})\leq L^{-2},\ \deg(\mathcal{G}_{L})\leq L^{C\overline{C}_{2^{d+1}}}.
\end{eqnarray*}
We should remark that $\mathcal{G}_L$ also depends on $\underline{C}_1,\cdots,\overline{C}_{2^{d+1}}$.

Finally, for $0<c_3(b,d)\ll c_4(b,d)\ll1$, we can choose appropriate $L_l, \underline{C}_1,\cdots,\overline{C}_{2^{d+1}}$ and then iterate along every axis direction (i.e., induction on $I$) as done by Bourgain \cite{Bou07} (see the proof of the \textsc{Claim})
if  $\vec\omega\in\mathcal{G}_{L_l}$. The number of all possible  $L_l, \underline{C}_1,\cdots,\overline{C}_{2^{d+1}}$ is finite and depends only on $b,d$. In particular, we have
\begin{align*}
\deg(\Omega_N)\leq CN^{c_4C\overline{C}_{2^{d+1}}}&\leq N^{4d},\\
 {\rm mes}([0,2\pi]^{b}\setminus\Omega_N)\leq CN^{-2c_3}&\leq N^{-c_2},
\end{align*}
where $\Omega_N=\bigcap\limits_{L_l,\underline{C}_1,\cdots,\overline{C}_{2^{d+1}}}\mathcal{G}_{L_l}$ and $ 0<c_2(b,d)\ll c_3(b,d)$. In addition, if $\vec\omega\in \Omega_N$, then for all $(E,\Theta)\in  \mathcal{I}_\varepsilon\times\mathbf{B}(100bN^2)$  there is some $\frac{1}{10}N^{c_3}<M<10N^{c_4}$
such that for all $\mathbf{k}\in [-M,M]^{b}\setminus[-M^{\frac{1}{10b}}, M^{\frac{1}{10b}}]^{b}$, one has $\Theta+\mathbf{k}\vec\omega\notin \mathbf{X}_{N_1}.$


\end{proof}

Combining the above proposition and Cartan's estimate on subharmonic functions, we can finish the proof of LDT in $[N, N^2]$.

Since \eqref{ebd} and \eqref{nres}, it suffices to consider in case
\begin{align}\label{Tbd}
\Theta\in \mathbf{B}(1000bN^2).
\end{align}

\begin{prop}\label{xn}
  Assume the assumptions of Proposition \ref{inism} are satisfied.  Let $\Theta$ satisfy \eqref{Tbd}.  Then  for  $\vec\omega\in\Omega_N$, we have
  \begin{itemize}
  \item[(1)] Fix $1\leq j\leq d$ and $\Theta_j^\neg\in \mathbf{B}_{d-1}(1000bN^2)$. Write $\Theta=(\Theta_j, \Theta_j^\neg)\in\R^{d}$. Assume there exist $\widetilde N\in[{N^{c_3}}/{4}, N^{c_4}]$ and $\bar{\Lambda}\subset \Lambda\in \mathcal{E}_{\widetilde N}$ with ${\rm diam}(\bar \Lambda)\leq 4\widetilde N^{\frac{1}{10b}}$ and $\Lambda\subset \mathbf{B}(1000bN^2)$ such that,
for any $ \mathbf{k}\in  \Lambda \backslash  \bar{\Lambda}$,
there exists some $\mathcal{E}_{N_1}\ni W\subset \Lambda\backslash  \bar{\Lambda}$ such that ${\rm dist}(\mathbf{k},\Lambda \backslash  \bar{\Lambda}\backslash W)\geq {N_1}/{2},$ and $\Theta+\mathbf{k}\vec\omega\notin \mathbf{X}_{N_1}$.
Let
\begin{align*}
  Y_\Theta=\left\{y\in \R: |y-\Theta_j|\leq e^{-10\rho N_1},\ |\Theta_j|\leq 1000b{N}^2,\ \|G_{\Lambda}(E;(y,\Theta_j^\neg))\|\geq e^{\sqrt{{\widetilde N}}}\right\}.
\end{align*}
Then
\begin{align*}
\mathrm{mes}(Y_\Theta)\leq e^{-{\widetilde N}^{\frac{1}{3}}}.
\end{align*}

\item[(2)] Fix $N_\star\in[N,N^2]$. If $E\in \mathcal{I}_\varepsilon$ and $0<c_1<c_3/10$, then there exists some  set $\mathbf{X}_{N_\star}=\mathbf{X}_{N_\star}(E,\vec\omega)\subset \mathbb{R}^d$  such that
 \begin{align*}\sup_{1\leq j\leq d,\Theta_j^\neg\in \R^{d-1}}\mathrm{mes}( \mathbf{X}_{N_\star}(\Theta_j^\neg))\leq e^{-N_\star^{c_1}},\end{align*}
and for $\Theta\notin \mathbf{X}_{N_\star}$, $Q\in\mathcal{E}_{N_\star}^{0}$,
\begin{align*}
 |G_{Q}(E;\Theta)(\mathbf{n},\mathbf{n}')|&\leq  e^{-(\frac{4}{5}\rho-\frac{C(\rho,b,d)}{\sqrt{N_1}})|\mathbf{n}-\mathbf{n}'|}\  {\mathrm{for} \ |\mathbf{n}-\mathbf{n}'|\geq {N_\star}/{10}}.
\end{align*}
\end{itemize}
\end{prop}

\begin{proof}

(1)  Let $\mathcal{D}$ be the $e^{-10\rho N_1}$ neighbourhood  of $\Theta_j$ in the complex plane, i.e.,
\begin{align*}
 \mathcal{D}=\{y\in \mathbb{C}:\  |\Im y|\leq e^{-10\rho N_1}, |\Re y-\Theta_j|\leq e^{-10\rho N_1}\}.
\end{align*}
From assumptions, we have for all $\mathbf{k}\in \Lambda\backslash \bar{\Lambda}$ and $Q\in \mathcal{E}_{N_1}^0$,
\begin{align}
\|G_{Q}(E;\Theta+\mathbf{k}\vec\omega)\|&\leq e^{\sqrt{N_1}},\label{Apr20}\\
|G_{Q}(E;\Theta+\mathbf{k}\vec\omega)(\mathbf{n},\mathbf{n}')|&\leq e^{-\frac{4\rho}{5}|\mathbf{n}-\mathbf{n}'|}\ {\rm for}\ |\mathbf{n}-\mathbf{n}'|\geq{N_1}/{10}.\label{Apr21}
\end{align}
Note that for all $\mathbf{n},\mathbf{n}'\in [-N_1,N_1]^{b}$,
\begin{align*}
e^{-10\rho N_1}<e^{-3\rho N_1-\rho|\mathbf{n}-\mathbf{n}'|}.
\end{align*}
Then by Lemma \ref{pag},  \eqref{Apr20} and \eqref{Apr21}, we have for any $y\in \mathcal{D}$, $Q\in \mathcal{E}_{N_1}^0$
and $\mathbf{k}\in\Lambda\backslash\bar\Lambda$,
\begin{align}
\|G_{Q}(E;(\Theta_j+y,\Theta_j^\neg)+\mathbf{k}\vec\omega)\|&\leq 2e^{\sqrt{N_1}},\label{Apr22}\\
|G_{Q}(E;(\Theta_j+y,\Theta_j^\neg)+\mathbf{k}\vec\omega)(\mathbf{n},\mathbf{n}')|&\leq 2e^{-\frac{4\rho}{5}|\mathbf{n}-\mathbf{n}'|}\ {\rm for}\ |\mathbf{n}-\mathbf{n}'|\geq{N_1}/10.\label{Apr23}
\end{align}
 Applying  Lemma \ref{res1} with $M_1=M_0=N_1$ implies for any $y\in \mathcal{D}$,
\begin{align}
\label{bs3}\|G_{\Lambda\setminus \bar\Lambda}(E;(\Theta_j+y,\Theta_j^\neg))\|\leq 4(2N_1+1)^{b}e^{\sqrt{N_1}}\leq e^{2\sqrt{N_1}}.
\end{align}

We then can use the following matrix-valued Cartan's estimate to propagate the ``smallness of measure'':

\begin{lem}[Cartan's estimate, \cite{BB}]\label{mcl}
Let $T(\theta)$ be a self-adjoint $N\times N$ matrix function of a parameter $\theta\in[-\delta,\delta]$  satisfying the following conditions:
\begin{itemize}
\item[(i)] $T(\theta)$ is real analytic in $\theta\in [-\delta,\delta]$ and has a holomorphic extension to
\begin{align*}
\mathcal{D}_{\delta,\delta_1}=\left\{\theta\in\mathbb{C}: \ |\Re \theta|\leq\delta,|\Im{\theta}|\leq \delta_1\right\}
\end{align*}
satisfying $\sup_{\theta\in \mathcal{D}_{\delta,\delta}}\|T(\theta)\|\leq K_1, K_1\geq 1.$

\item[(ii)]  For all $\theta\in[-\delta,\delta]$, there is subset $V\subset [1,N]$ with $|V|\leq M$ such that
\begin{align*}
\|(R_{[1,N]\setminus V}T(\theta)R_{[1,N]\setminus V})^{-1}\|\leq K_2, K_2\geq 1.
\end{align*}
\item[(iii)] Assume
\begin{align*}
\mathrm{mes}\{\theta\in[-{\delta}, {\delta}]: \ \|T^{-1}(\theta)\|\geq K_3\}\leq 10^{-3}\delta(1+K_1)^{-1}(1+K_2)^{-1}.
\end{align*}
\end{itemize}
Let $0<\epsilon\leq (1+K_1+K_2)^{-10 M}.$ Then
\begin{align}\label{mc5}
\mathrm{mes}\left\{\theta\in\left[-{\delta}/{2}, {\delta}/{2}\right]:\  \|T^{-1}(\theta)\|\geq \epsilon^{-1}\right\}\leq C\delta e^{-\frac{c\log \epsilon^{-1}}{M\log(K_1+K_2+K_3)}},
\end{align}
where $C, c>0$ are some absolute constants.
\end{lem}
We will apply  Lemma \ref{mcl}
with  \begin{align}\label{sep17}
T(y)=h_{\Lambda}((\Theta_j+y,\Theta_j^\neg))-{E},\delta=\delta_1=2e^{-10\rho N_1}.
\end{align}
It suffices to verify the assumptions of Lemma \ref{mcl}.
By assumptions \eqref{Tbd}, $\Lambda\subset \mathbf{B}(1000bN^2)$ and \eqref{bs3}, one has
\begin{align}\label{mb}
K_1=O(N^2),  M=|\bar\Lambda|\leq C(b,d)\widetilde N^{1/{10}},
K_2=e^{2\sqrt{N_1}}.
\end{align}
Since LDT holds at scale $N_2$ for $y$ being outside a set  of measure at most $e^{-{N_2^{c_1}}}$.
Applying Lemma \ref{res1} yields
\begin{eqnarray*}
 \|T^{-1}(y)\|\leq 4(2N_2+1)^{b}e^{\sqrt{N_2}}\leq e^{2\sqrt{N_2}}=K_3
\end{eqnarray*}
for $y$ being outside a set  of measure at most
$$(2\widetilde N+1)^{b}e^{-{N_2^{c_1}}}\leq e^{-{N_2^{c_1}}/{2}}.$$
It follows from $100N_1<N_1^{3/2}<N_2^{c_1}$ that
\begin{equation*}
10^{-3}\delta_1(1+K_1)^{-1}(1+K_2)^{-1}\geq e^{-{N_2^{c_1}}/{2}}.
\end{equation*}
This verifies (iii) of Lemma \ref{mcl}.
For $\epsilon=e^{-\sqrt{{\widetilde N}}}$,  one has by \eqref{sep17} and \eqref{mb},
  $\epsilon<(1+K_1+K_2)^{-10M}.$
By \eqref{mc5} of Lemma \ref{mcl},
\begin{equation*}
\mathrm{mes}(Y_\Theta)\leq e^{-\frac{c\sqrt{{\widetilde N}}}{N_2\widetilde N^{1/{10}}\log \widetilde N}}\leq e^{-\widetilde N^{1/3}}.
\end{equation*}

(2) Fix $N_\star\in[N,N^2]$ and $E\in \mathcal{I}_\varepsilon$. As done in \eqref{nres}, to prove LDT at scale $N_\star$ it suffices to restrict $\Theta\in \mathbf{B}(10bN_\star)\subset \mathbf{B}(10bN^2)$.

Fix $1\leq j\leq d,\Theta_j^\neg\in\R^{d-1}$ and  $\Theta=(\Theta_j,\Theta_j^\neg)\in\mathbf{B}(10bN_\star)$.  Then $\Theta+\mathbf{n}\vec\omega\in \mathbf{B}(100bN^2)$ for all $|\mathbf{n}|\leq N_\star$.
By using Proposition \ref{inism} ( with $\Theta$ replaced by $\Theta+\mathbf{n}\vec\omega$), for such $\Theta$ and any $\mathbf{n}\in Q\in\mathcal{E}_{N_\star}^0$, there exist $\frac{1}{4}N^{c_3}\leq \widetilde{N} \leq N^{c_4}$,  $\Lambda \in \mathcal{E}_{\widetilde{N}}$ and $\bar{\Lambda}$, such that
\begin{align*}
\mathbf{n}\in \bar\Lambda\subset\Lambda \subset Q, {\rm dist}(\mathbf{n},  Q\backslash \Lambda)\geq{\widetilde{N}}/{2},\ {\rm diam}(\bar{\Lambda}) \leq 4 \widetilde{N}^{\frac{1}{10b}}.
\end{align*}
Moreover,  for any $ \mathbf{k}\in  \Lambda\backslash  \bar{\Lambda}$, $\Theta+\mathbf{k}\vec\omega\notin \mathbf{X}_{N_1}$ and
there exists some $\mathcal{E}_{N_1} \ni W \subset \Lambda\backslash  \bar{\Lambda}$ such that $\mathbf{k}\in W,\  {\rm dist}(\mathbf{k},\Lambda\backslash  \bar{\Lambda}\backslash W)\geq {N_1}/{2}.$
We should remark that in the above argument we had applied essentially methods of \cite{JLS20} in dealing with elementary regions (of size $\widetilde N$) near the boundary of $Q$.  

We now fix above $\widetilde N, \bar\Lambda,\Lambda$ throughout the set $\{(y,\Theta_j^\neg)\in\R^d:\ |y-\Theta_j|\leq e^{-10\rho N_1}\}$. Recalling Lemma \ref{pag} and the above constructions, we have by (1) of Proposition \ref{xn} that 
 there exists a  $Y\subset \{y\in\R:\ |y-\Theta_j|\leq e^{-10\rho N_1}\}$ such that
\begin{equation}\label{Apr6}
\mathrm{mes}(Y)\leq e^{-\widetilde{N}^{{1}/{3}}},
\end{equation}
and for $\Theta_j\notin Y$, $\|G_{\Lambda}(E;\Theta)\|\leq e^{\sqrt{\widetilde{N}}}.$
Applying  Lemma \ref{res2} yields 
\begin{equation*}\label{Apr8}
  |G_{\Lambda}(E;\Theta)(\mathbf{n},\mathbf{n}')|\leq  e^{- (\frac{4\rho}{5}-\frac{C}{\sqrt{N_1}})|\mathbf{n}-\mathbf{n}'|}\  {\mathrm{for} \ |\mathbf{n}-\mathbf{n}'|\geq {\widetilde{N}}/{10}}.
\end{equation*}

Cover $[0,10bN_\star]$ by pairwise disjoint $e^{-10\rho N_1}$-size intervals and
let
\begin{equation}\label{Apr9}
 \mathbf{X}_{N_\star}(\Theta_j^\neg)=\bigcup_{Q\in\mathcal{E}_{N_\star}^0, \mathbf{n}\in Q, \Theta=(\Theta_j,\Theta_j^\neg)}
 Y.
\end{equation}
We remark that while $\Theta=(\Theta_j,\Theta_j^\neg)$ varies on a line for fixed $\Theta_j^\neg$, the total number of $Y$ is bounded by $10N_\star e^{10\rho N_1}$. Thus by \eqref{Apr6}, \eqref{Apr9} and $c_1< c_3/10$,
one has 
\begin{equation*}\label{Apr12}
\mathrm{mes}({\mathbf{X}}_{{N_\star}}(\Theta_j^\neg))\leq C(2N_\star+1)^{b}e^{10\rho N_1}e^{-\widetilde N^{1/3}} \leq e^{-{N_\star}^{c_3/7}}\leq e^{-{N_\star}^{c_1}}.
\end{equation*}

Suppose  now $\Theta\notin \mathbf{X}_{N_\star}$. Applying 
 Lemma \ref{res1} yields 
\begin{align*}
\|G_{Q}(E;\Theta)\|\leq 4(2 N^{c_4}+1)^{b}e^{\sqrt{N^{c_4}}}\leq e^{\sqrt{N_\star}}.
\end{align*}
Recalling  Lemma \ref{res2}, we obtain
\begin{align*}
  |G_{Q}(E;\Theta)(\mathbf{n},\mathbf{n}')|\leq  e^{- (\frac{4\rho}{5}-\frac{C(\rho,b,d)}{\sqrt{N_1}})|\mathbf{n}-\mathbf{n}'|}\  {\mathrm{for} \ |\mathbf{n}-\mathbf{n}'|\geq {N_\star}/{10}}.
\end{align*}
\end{proof}

{\bf STEP 3: The General Step}

The proof is  based on  similar arguments as in \textbf{STEP 2}.

We define for  $N\geq e^{|\log\varepsilon|^{1/4}}$ the following scales
\begin{align*}
N_1\sim (\log N)^{2/{c_1}},\ N_2\sim N_1^{2/{c_1}}.
\end{align*}
Then we have
\begin{prop}\label{claim}
Let $\Omega_{N_i}$ $(i=1,2)$ be semi-algebraic set satisfying $\deg(\Omega_{N_i})\leq N_i^{4d}$ and let $\bar\rho_i\in [\rho/2,\rho)$. Assume further the following holds:
If  $\vec\omega\in\Omega_{N_i}$ and $E\in\mathcal{I}_\varepsilon$, then there exists some semi-algebraic set $\mathbf{X}_{N_i}\subset \mathbb{R}^d$ satisfying $\deg(\mathbf{X}_{N_i})\leq N_i^{C}$  such that
 \begin{align*}\sup_{1\leq j\leq d,\Theta_j^\neg\in \R^{d-1}}\mathrm{mes}( \mathbf{X}_{N_i}(\Theta_j^\neg))\leq e^{-N_i^{c_1}},\end{align*}
and for  $\Theta\notin \mathbf{X}_{N_i}$, $Q\in\mathcal{E}_{N_i}^{0}$,
\begin{align*}
\|G_{Q}(E;\Theta)\|&\leq e^{\sqrt{N_i}},\\
|G_{Q}(E;\Theta)(\mathbf{n},\mathbf{n}')|&\leq  e^{-\bar\rho_i|\mathbf{n}-\mathbf{n}'|}\  {\mathrm{for} \ |\mathbf{n}-\mathbf{n}'|\geq {N_i}/{10}},\\
\nonumber&(i=1,2).
\end{align*}

Then exists some semi-algebraic set $\Omega_{{N}} \subset \Omega_{{N}_1}\cap\Omega_{{N}_2}$ with  $\deg(\Omega_{N})\leq {N}^{4d}$ and
$\mathrm{mes}((\Omega_{{N}_1}\cap\Omega_{{N}_2})\backslash\Omega_{{N}})\leq {N}^{-c_2}$
such that, if $\vec\omega\in\Omega_{N}$, then for $N\geq N_0(c_1,\rho,b,d)>0$
\begin{itemize}
\item[(i)] For all $E\in \mathcal{I}_\varepsilon$ and $\Theta\in \mathbf{B}(100bN^2)$, there is  $\frac{{N}^{c_3}}{10}<M<10{N}^{c_4}$ such that
for all  $\mathbf{k}\in  [-M,M]^{b}\setminus[-M^{\frac{1}{10b}},M^{\frac{1}{10b}}]^{b}$,  $ \Theta+\mathbf{k}\vec\omega \notin \mathbf{X}_{N_1}$.

\item[(ii)] Fix $N_\star\in[N,N^2]$. Then there exists some  set $\mathbf{X}_{N_\star}=\mathbf{X}_{N_\star}(E,\omega)\subset \mathbb{R}^d$  such that
 \begin{align*}\sup_{1\leq j\leq d,\Theta_j^\neg\in \R^{d-1}}\mathrm{mes}( \mathbf{X}_{N_\star}(\Theta_j^\neg))\leq e^{-N_\star^{c_1}},\end{align*}
and for $\Theta\notin \mathbf{X}_{N_\star}$, $Q\in\mathcal{E}_{N_\star}^{0}$,
\begin{align*}
 \|G_{Q}(E;\Theta)\|&\leq e^{\sqrt{N_\star}},\\
 |G_{Q}(E;\Theta)(\mathbf{n},\mathbf{n}')|&\leq  e^{-(\overline{\rho}_1-\frac{C(\rho,b,d)}{\sqrt{N_1}})|\mathbf{n}-\mathbf{n}'|}\  {\mathrm{for} \ |\mathbf{n}-\mathbf{n}'|\geq {N_\star}/{10}}.
\end{align*}
\end{itemize}
\end{prop}

\begin{proof}
The proof is similar to that in \textbf{STEP 2}, and we omit the details here.
\end{proof}

From the above arguments, we can propagate LDT from (scales) $$\left[N, N^{2/{c_1}}\right]$$ to $$\left[e^{N^{c_1/2}}, e^{2N^{c_1/2}}\right].$$  Thus a standard MSA induction (on scales) can  establish LDT on  the whole interval $[N_0,\infty]$ (see \cite{BGS02} or \cite{JLS20} for details).  This finishes the proof of Theorem \ref{ldt}.

\end{proof}

\section{Proof of Theorem \ref{mthm1}}
In this section we will prove Theorem \ref{mthm1} by using LDT and Aubry duality. 

We begin with a useful lemma:

\begin{lem}\label{shnol}
Let $\Psi\in H^2(\R^d)$ satisfy 
\begin{align}\label{sche}
(-\Delta+\varepsilon V(\vec\theta+x\vec\omega))\Psi(x)=E\Psi(x).
\end{align}
Let $\widehat{\Psi}$ be the Fourier transformation of $\Psi$. Then for a.e. $\Theta\in\R^d$  and any $\vec\theta\in\T^{b}$ the following holds: Let  $Z=\{Z_\mathbf{k}\}_{\mathbf{k}\in\Z^{b}}$ satisfy $Z_\mathbf{k}=e^{-\mathbf{i}\mathbf{k}\cdot\vec\theta}\widehat{\Psi}(\Theta+\mathbf{k}\vec\omega)$. Then
\begin{align}\label{dsch}
h(\Theta)Z=EZ,
\end{align}
where $h(\Theta)$ is given by \eqref{aubry}.
Moreover,  There is $C=C(\Theta,\Psi,b,d)\in (0,\infty)$ such that
\begin{align}\label{poly}
|Z_\mathbf{k}|\leq C(1+|\mathbf{k}|)^{5b}\ {\rm for}\ \forall\ \mathbf{k}\in\Z^{b}.
\end{align}
\end{lem}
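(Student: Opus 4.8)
The plan is to derive the difference equation \eqref{dsch} by taking Fourier transforms in \eqref{sche}, and then to establish the polynomial bound \eqref{poly} by combining elliptic regularity for $\Psi$ with a limiting-absorption/a.e.\ argument in $\Theta$. First I would write $\Psi = \sum_{\mathbf{k}\in\Z^{b}} e^{\mathbf{i}\mathbf{k}\cdot(\vec\theta + x\vec\omega)}\, \Psi_{\mathbf{k}}(x)$ only heuristically; more cleanly, since $V$ is quasi-periodic with $V(\vec\theta + x\vec\omega) = \sum_{\mathbf{k}} \widehat V_{\mathbf{k}} e^{\mathbf{i}\mathbf{k}\cdot\vec\theta} e^{\mathbf{i}(\mathbf{k}\vec\omega)\cdot x}$ (using $\mathbf{k}\cdot(x\vec\omega) = (\mathbf{k}\vec\omega)\cdot x$ in the notation of the paper), multiplication by $V(\vec\theta+x\vec\omega)$ becomes, on the Fourier side, the operator $\widehat\Psi(\Theta)\mapsto \sum_{\mathbf{k}'} \widehat V_{\mathbf{k}'} e^{\mathbf{i}\mathbf{k}'\cdot\vec\theta}\,\widehat\Psi(\Theta - K\mathbf{k}'\vec\omega)$ — i.e.\ a superposition of translations by the frequency module vectors $K\mathbf{k}'\vec\omega$. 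Taking the Fourier transform of \eqref{sche} and evaluating at the shifted points $\Theta + K\mathbf{k}\vec\omega$ (after the rescaling $\Theta = \Theta'/K$, $\varepsilon = \lambda/K^2$ already set up in the excerpt), and absorbing the phase $e^{-\mathbf{i}\mathbf{k}\cdot\vec\theta}$ into $Z_{\mathbf{k}}$, the Laplacian contributes the diagonal term $\sum_i (\Theta_i + \mathbf{k}_i\cdot\vec\omega_i)^2 Z_{\mathbf{k}}$ and the potential contributes the convolution $\sum_{\mathbf{k}'}\varepsilon \widehat V_{\mathbf{k}-\mathbf{k}'} Z_{\mathbf{k}'}$, which is exactly $h(\Theta)Z = EZ$. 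The convolution sum converges absolutely because of the exponential decay \eqref{gvc} of $\widehat V_{\mathbf{k}}$ together with the polynomial growth of $Z_{\mathbf{k}}$ to be proved next.

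For the pointwise bound \eqref{poly}, the key input is that $\Psi\in H^2(\R^d)$: by the Sobolev embedding in the frequency variable, $(1+|\Theta|^2)\widehat\Psi(\Theta)\in L^2(\R^d)$, hence $\widehat\Psi\in L^2$ with a weight, but this alone only gives $\widehat\Psi(\Theta + K\mathbf{k}\vec\omega)$ finite for a.e.\ $\Theta$ and each fixed $\mathbf{k}$, not a uniform-in-$\mathbf{k}$ rate. The standard device (essentially Shnol-type / Berezanskii, and the reason for the superscript "shnol" on the lemma label) is to average: for $\widehat\Psi\in L^2$, the function $\Theta\mapsto \sup_{\mathbf{k}} (1+|\mathbf{k}|)^{-5b}|\widehat\Psi(\Theta + K\mathbf{k}\vec\omega)|$ is finite for a.e.\ $\Theta$ because $\sum_{\mathbf{k}} (1+|\mathbf{k}|)^{-\alpha b}\int_{\text{unit box}} |\widehat\Psi(\Theta + K\mathbf{k}\vec\omega)|^2\,d\Theta < \infty$ for any $\alpha > 1$ — one partitions $\R^d$ into fundamental-domain-size cells (of size comparable to the frequency spacing, so that each translate $K\mathbf{k}\vec\omega$ lands in a controlled number of cells), uses that the translated copies overlap boundedly (here one needs the translates $K\mathbf{k}\vec\omega\in\R^d$ to be reasonably spread, which follows from $b_i\ge 1$ and rational independence for a.e.\ $\vec\omega$, or more simply one just sums the $L^2$ mass of $\widehat\Psi$ over all relevant shells), and then invokes that a non-negative series of integrable functions with finite total integral has a.e.\ finite sum, so its summands go to zero, giving $|\widehat\Psi(\Theta + K\mathbf{k}\vec\omega)|^2 = o((1+|\mathbf{k}|)^{2\cdot 5b - \text{(something)}})$ for a.e.\ $\Theta$. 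Choosing the exponent $5b$ generously (any power strictly larger than what the counting forces, which is essentially $d/2$ plus the overlap count $\lesssim b$) yields \eqref{poly} with a constant $C = C(\Theta,\Psi,b,d)$.

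The step I expect to be the main obstacle is making the a.e.-in-$\Theta$ polynomial bound genuinely uniform in $\mathbf{k}$ — that is, turning "$\widehat\Psi(\Theta + K\mathbf{k}\vec\omega)$ is finite for a.e.\ $\Theta$, each $\mathbf{k}$" into "$\sup_{\mathbf{k}}(1+|\mathbf{k}|)^{-5b}|\widehat\Psi(\Theta+K\mathbf{k}\vec\omega)|<\infty$ for a.e.\ $\Theta$", since this requires a careful Borel--Cantelli / summation argument controlling how many lattice translates $K\mathbf{k}\vec\omega$ fall into each unit cell of $\R^d$ (the module $\{K\mathbf{k}\vec\omega\}$ sits in $\R^d$ but is indexed by $\Z^{b}$ with $b > d$, so it is \emph{dense}, and one must bound the local density by a polynomial in the radius, which is where the hypotheses $b_i\in\N$ and the genericity of $\vec\omega$ enter). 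A secondary point is checking that the formal Fourier manipulation producing \eqref{dsch} is rigorous for $\Psi\in H^2$ only — one should interpret it distributionally or pass to the equation first and then restrict to the countable set of shifted points, noting that each individual identity holds for a.e.\ $\Theta$ and there are only countably many $\mathbf{k}$, so a single full-measure set of $\Theta$ works for all of them simultaneously. Once both points are handled, the absolute convergence of the convolution in $h(\Theta)Z$ and the validity of \eqref{dsch} are immediate from \eqref{gvc} and \eqref{poly}.
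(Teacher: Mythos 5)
Your derivation of the difference equation \eqref{dsch} is correct and is exactly the paper's argument (expand $V$ in its Fourier series, take the Fourier transform of \eqref{sche}, evaluate at the shifted points $\Theta+\mathbf{k}\vec\omega$, and absorb the phase). The distributional/countable-set caveat you raise is handled by the paper simply by reducing to $\Psi\in C_0^\infty$ and passing to the limit; it is not a serious issue.

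For the polynomial bound \eqref{poly}, however, you misidentify the difficulty. You present the a.e.-uniformity in $\mathbf{k}$ as requiring a Borel--Cantelli / lattice-overlap argument — counting how many translates $K\mathbf{k}\vec\omega$ land in each unit cell, worrying that the module $\{\mathbf{k}\vec\omega\}$ is dense in $\R^d$ since $b>d$, and invoking $b_i\ge 1$, rational independence, or genericity of $\vec\omega$. None of this enters. The paper's argument is a one-line Tonelli computation over \emph{all} of $\R^d$ rather than over a unit box: by translation invariance of Lebesgue measure,
\begin{align*}
\int_{\R^d}\sum_{\mathbf{k}\in\Z^{b}}\frac{|Z_\mathbf{k}(\Theta)|^2}{(1+|\mathbf{k}|)^{10b}}\,{\rm d}\Theta
=\sum_{\mathbf{k}\in\Z^{b}}\frac{1}{(1+|\mathbf{k}|)^{10b}}\int_{\R^d}|\widehat\Psi(\Theta+\mathbf{k}\vec\omega)|^2\,{\rm d}\Theta
=\|\widehat\Psi\|_{L^2}^2\sum_{\mathbf{k}\in\Z^{b}}\frac{1}{(1+|\mathbf{k}|)^{10b}}<\infty,
\end{align*}
since each shifted $L^2$-integral equals the \emph{same} value $\|\widehat\Psi\|_{L^2}^2$, regardless of $\mathbf{k}$ or $\vec\omega$. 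Hence the non-negative integrand is finite for a.e.\ $\Theta$, call its value $C(\Theta,\Psi,b,d)$, and each summand separately is at most $C$, giving $|Z_\mathbf{k}|\le C^{1/2}(1+|\mathbf{k}|)^{5b}$. No overlap counting, no density estimate, no restriction on $\vec\omega$ whatsoever — the bound holds for every frequency. Your aside ``or more simply one just sums the $L^2$ mass of $\widehat\Psi$ over all relevant shells'' is in fact the whole proof; the elaborate machinery you treat as the main content of the lemma is a detour.

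One more small slip: you write the shifts as $K\mathbf{k}\vec\omega$, mixing the original and rescaled coordinates. The lemma as stated is in the rescaled variables with operator $h(\Theta)$ built from $\vec\omega$ (not $K\vec\omega$); the rescaling $\Theta\mapsto\Theta/K$, $E\mapsto E/K^2$, $\varepsilon=\lambda/K^2$ is applied later, in the proof of Theorem \ref{mthm1}. This does not affect the substance, but keeping the scalings separate avoids confusion.
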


\begin{proof}
It suffices to consider $\Psi\in C_0^\infty(\R^d)$. We note first \eqref{sche} is equivalent to
\begin{align}\label{fsch}
{\|}\xi{\|}^2\widehat{\Psi}(\xi)+\widehat{\varepsilon V\Psi}(\xi)=E\widehat{\Psi}(\xi),
\end{align}
where $\|\xi\|^2={\sum\limits_{i=1}^d|\xi_i|^2}$. Since $V$ is quasi-periodic (and analytic), we have $${V}(\vec\theta+{x\vec\omega})=\sum_{\mathbf{k}\in\Z^{b}}\widehat{V}_\mathbf{k}e^{\mathbf{i}\mathbf{k}\cdot(\vec\theta+{x\vec\omega})},$$
and as a result,
\begin{align}
\nonumber\widehat{ V\Psi}(\xi)&=\int_{\R^d}V(\vec\theta+{x\vec\omega})\Psi(x)e^{-\mathbf{i}x\cdot\xi}{\rm d} x\\
\nonumber&=\sum_{\mathbf{m}\in\Z^{b}}\int_{\R^d}\widehat{V}_\mathbf{m}e^{\mathbf{i}\mathbf{m}\cdot(\vec\theta+{x\vec\omega})}\Psi( x)e^{-\mathbf{i} x\cdot\xi}{\rm d}x\\
\nonumber&=\sum_{\mathbf{m}\in\Z^{b}}\widehat{V}_\mathbf{m}e^{\mathbf{i}\mathbf{m}\cdot\vec\theta} \int_{\R^d}\Psi(x)e^{-\mathbf{i} x\cdot(\xi-\mathbf{m}\vec\omega)}{\rm d} x\\
\label{vpsi}&=\sum_{\mathbf{m}\in\Z^{b}}\widehat{V}_\mathbf{m}e^{\mathbf{i}\mathbf{m}\cdot\vec\theta}\widehat{\Psi}(\xi-\mathbf{m}\vec\omega).
\end{align}
Combining \eqref{fsch} and \eqref{vpsi} yields
\begin{align}\label{fsch1}
{\|}\xi{\|}^2\widehat{\Psi}(\xi)+\varepsilon \sum_{\mathbf{m}\in\Z^{b}}\widehat{V}_me^{\mathbf{i}\mathbf{m}\cdot\vec\theta}\widehat{\Psi}(\xi-\mathbf{m}\vec\omega)=E\widehat{\Psi}(\xi).
\end{align}
Given $\mathbf{k}\in \Z^{b}$, we set  $\xi=\Theta+\mathbf{k}\vec\omega$ in \eqref{fsch1}. Then
\begin{align*}
&e^{-i\mathbf{k}\cdot\vec\theta}{\|}\Theta+\mathbf{k}\vec\omega{\|}^2\widehat{\Psi}(\Theta+\mathbf{k}\vec\omega)+\varepsilon \sum_{\mathbf{m}\in\Z^{b}}\widehat{V}_{\mathbf{k}-\mathbf{m}}e^{-\mathbf{i}\mathbf{m}\cdot\vec\theta}\widehat{\Psi}(\Theta+\mathbf{m}\vec\omega)\\
&\ \ =Ee^{-i\mathbf{k}\cdot\vec\theta}\widehat{\Psi}(\Theta+\mathbf{k}\vec\omega),
\end{align*}
which implies \eqref{dsch}.

We then deal with the polynomial bound \eqref{poly}. Since $\widehat{\Psi}\in L^2(\R^d)$, we have
\begin{align*}
\int_{\R^d} \sum_{\mathbf{k}\in\Z^{b}}\frac{|Z_\mathbf{k}|^2(\Theta)}{(1+|\mathbf{k}|)^{10b}}{\rm d}\Theta
&=\sum_{\mathbf{k}\in\Z^{b}}\frac{1}{(1+|\mathbf{k}|)^{10b}}\int_{\R^d} {|\widehat{\Psi}(\Theta+\mathbf{k}\vec\omega)|^2}{\rm d}\Theta\\
&=\|\widehat\Psi\|_{L^2}^2\sum_{\mathbf{k}\in\Z^{b}}\frac{1}{(1+|\mathbf{k}|)^{10b}}<\infty,
\end{align*}
which implies for a.e. $\Theta\in\R^d$,
\begin{align*}
\sum_{\mathbf{k}\in\Z^{b}}\frac{|Z_\mathbf{k}|^2(\Theta)}{(1+|\mathbf{k}|)^{10b}}:=C(\Theta,\Psi,b,d)<\infty.
\end{align*}
This means that for a.e. $\Theta\in\R^d$,
\begin{align*}
|Z_\mathbf{k}|\leq C(\Theta,\Psi,b,d)(1+|\mathbf{k}|)^{5b}\  {\rm for} \ \forall\  \mathbf{k}\in\Z^{b}.
\end{align*}

\end{proof}

We then prove our main result Theorem \ref{mthm1}

\begin{proof}[\bf Proof of Theorem \ref{mthm1}]

Let $$\widetilde \Theta=\frac{\Theta}{K}, \widetilde E=\frac{E}{K^2}, \varepsilon=\frac{\lambda}{K^2}.$$

Suppose now $H(\vec\theta)$ admits some eigenvalue $E\in\left[-K^2(\log\frac{K^2}{\lambda})^{\frac{1}{2c_1}}, K^2(\log\frac{K^2}{\lambda})^{\frac{1}{2c_1}}\right]$, i.e., there is some $\Psi\in H^2(\R^d)$ such that  $H(\vec\theta)\Psi=E\Psi, \Psi\neq0$. Then we have $\widetilde E\in \mathcal{I}_\varepsilon$ (see \eqref{ebd}) and
\begin{align*}
(-\Delta+\lambda V(\vec\theta+Kx\vec\omega))\Psi(x)=E\Psi(x).
\end{align*}
Using Lemma \ref{shnol}, we know $Z(\Theta)=\{Z_\mathbf{k}=e^{-\mathbf{k}\cdot\vec\theta}\widehat\Psi(\Theta+K\mathbf{k}\vec\omega)\}_{\mathbf{k}\in\Z^{b}}$ satisfies
\begin{align}\label{latt}
\sum_{\mathbf{k}'\in\Z^{b}} \lambda\widehat V_{\mathbf{k}-\mathbf{k}'}Z_{\mathbf{k}'}(\Theta)+\sum_{i=1}^{d}(\Theta_i+K\mathbf{k}_i\cdot\vec\omega_i)^2Z_{\mathbf{k}}(\Theta)=EZ_{\mathbf{k}}(\Theta).
\end{align}
Obviously, \eqref{latt} is equivalent to
\begin{align*}
\sum_{\mathbf{k}'\in\Z^{b}} \frac{\lambda}{K^2}\widehat V_{\mathbf{k}-\mathbf{k}'}Z_{\mathbf{k}'}(\Theta)+\sum_{i=1}^{d}({\Theta_i}/{K}+\mathbf{k}_i\cdot\vec\omega_i)^2Z_{\mathbf{k}}(\Theta)
=\frac{E}{K^2}Z_{\mathbf{k}}(\Theta),
\end{align*}
or in the new coordinate $(\widetilde\Theta,\widetilde E,\varepsilon)$,
\begin{align}\label{dvk}
\sum_{\mathbf{k}'\in\Z^{b}}
\varepsilon\widehat V_{\mathbf{k}-\mathbf{k}'}\widetilde Z_{\mathbf{k}'}(\widetilde\Theta)+\sum_{i=1}^{d}(\widetilde \Theta_i+\mathbf{k}_i\cdot\vec\omega_i)^2\widetilde Z_{\mathbf{k}}(\widetilde\Theta)=\widetilde E \widetilde Z_{\mathbf{k}}(\widetilde\Theta),
\end{align}
where
\begin{align}\label{tzd}
\widetilde Z_\mathbf{k}(\widetilde\Theta):=Z_\mathbf{k}(\Theta)\ {\rm for}\ \forall\  \mathbf{k}\in\Z^{b}.
\end{align}

Note that \eqref{dvk} has the form of $h(\widetilde\Theta)\widetilde Z=\widetilde E\widetilde Z$ with $h(\widetilde\Theta)$ being given by \eqref{aubry}. Furthermore, recalling  \eqref{poly} and \eqref{tzd}, we have for a.e. $\widetilde\Theta$,
\begin{align}\label{zkt}
|\widetilde Z_\mathbf{k}(\widetilde\Theta)|\leq C(\widetilde\Theta,\Psi,b,d)(1+|\mathbf{k}|)^{5b}.
\end{align}


Let $\delta>0$ be given. We obtain by Theorem \ref{ldt} that for $0<\varepsilon=\frac{\lambda}{K^2}\leq \varepsilon_0(\delta,c_1,\rho,b,d),$
\begin{align*}
{\rm mes}\left([0,2\pi]^{b}\setminus \bigcap_{N\geq N_0}\Omega_N\right)\leq \delta.
\end{align*}
We  fix $\vec\omega\in \bigcap\limits_{N\geq N_0}\Omega_N$. Then since $\widetilde E\in \mathcal{I}_{\varepsilon}$ and $N\geq N_0$,  there exists some  set $\mathbf{X}_N=\mathbf{X}_N(\vec\omega,\widetilde E)\subset \mathbb{R}^d$  such that
 \begin{align}\label{xnb}
 \sup_{1\leq j\leq d,\widetilde \Theta_j^\neg\in \R^{d-1}}\mathrm{mes}(\mathbf{X}_N(\widetilde \Theta_j^\neg))\leq e^{-N^{c_1}},
 \end{align}
and for  $\widetilde\Theta\notin \mathbf{X}_N$, $Q\in\mathcal{E}_N^{0}$,
\begin{align}
\label{gf1} \|G_{Q}(\widetilde E;\widetilde\Theta)\|&\leq e^{\sqrt{N}},\\
\label{gf2}|G_{Q}(\widetilde E;\widetilde\Theta)(\mathbf{n},\mathbf{n}')|&\leq  e^{-\frac{\rho}{10}|\mathbf{n}-\mathbf{n}'|}\  {\mathrm{for} \ |\mathbf{n}-\mathbf{n}'|\geq {N}/{10}}.
\end{align}
We should remark that \eqref{gf1} and \eqref{gf2} obviously hold true if $|\widetilde\Theta|\geq 10bN$. Without loss of generality, we may assume $\mathbf{X}_N\subset \mathbf{B}(10bN)$. As a result, we have by Fubini Theorem and \eqref{xnb} that
\begin{align*}
{\rm mes}(\mathbf{X}_N)\leq CN^{d-1}e^{-N^{c_1}}\leq e^{-N^{c_1}/2}
\end{align*}
and
\begin{align*}
\sum_{N\geq N_0}{\rm mes}(\mathbf{X}_N)<\infty.
\end{align*}
Then we obtain by using Borel-Cantelli Theorem that
\begin{align*}
{\rm mes}(\mathbf{X}_\infty)=0,
\end{align*}
where  $$\mathbf{X}_\infty=\bigcap_{N\geq N_0}\bigcup_{M\geq N}\mathbf{X}_M.$$
Assuming $\widetilde\Theta\notin \mathbf{X}_\infty$, then there exists $N_1(\widetilde\Theta)\geq N_0$ such that $\widetilde\Theta\not\in \mathbf{X}_N$ for all $N\geq N_1$. Recall that the Poisson's identity: For $h(\widetilde\Theta)\widetilde Z=\widetilde E \widetilde Z$ and $\mathbf{n}\in \Lambda\subset\Z^{b}$,
\begin{align*}
\widetilde Z_\mathbf{n}(\widetilde\Theta)=-\varepsilon\sum_{\mathbf{n}'\in\Lambda,\mathbf{n}''\notin\Lambda}G_{\Lambda}(\widetilde E;\widetilde \Theta)(\mathbf{n},\mathbf{n}')\widehat{V}_{\mathbf{n}'-\mathbf{n}''}\widetilde Z_{\mathbf{n}''}(\widetilde\Theta).
\end{align*}
Thus  for a.e. $\widetilde\Theta$ and  $N\geq N_1$,
\begin{align}
\nonumber|\widetilde Z_{\mathbf{0}}(\widetilde\Theta)|&=|\sum_{|{\mathbf{n}}|\leq N,|{\mathbf{n}}'|>N}G_{[-N,N]^{b}}(\widetilde E;\widetilde \Theta)(\mathbf{0},{\mathbf{n}})\widehat{V}_{{\mathbf{n}}-{\mathbf{n}}'} \widetilde{Z}_{{\mathbf{n}}'}(\widetilde \Theta)|\\
\nonumber&\leq C(\widetilde\Theta,\Psi,b,d) \sum_{|\mathbf{n}|\leq N,|\mathbf{n}'|>N}e^{-\frac{\rho}{10}|{\mathbf{n}}|+\frac{\rho N}{100}+\sqrt{N}}e^{-\rho|{\mathbf{n}}-{\mathbf{n}}'|} |{\mathbf{n}}'|^{5b}\\
\nonumber&\leq C(\widetilde\Theta,\Psi, b,d)N^{b}\sum_{|{\mathbf{n}}'|>N}e^{-\frac{\rho}{10}|{\mathbf{n}}'|+\frac{\rho N}{100}+\sqrt{N}}|{\mathbf{n}}'|^{5b}\\
\label{psi0}&\leq e^{-\frac{\rho N}{20}}.
\end{align}
Letting $N\to\infty$ in \eqref{psi0}, we have $\widetilde Z_{\mathbf{0}}(\widetilde\Theta)=0$ for a.e. $\widetilde\Theta$. This implies
\begin{align*}
0=\int_{\R^d}|\widetilde Z_\mathbf{0}(\widetilde\Theta)|^2{\rm d}\widetilde \Theta=K^{-d}\int_{\R^d}|\widehat \Psi(\Theta)|^2{\rm d}\Theta,
\end{align*}
 and $\Psi=0$, a contradiction.

We have shown $H(\vec\theta)$ has no eigenvalues in $\left[-K^2(\log\frac{K^2}{\lambda})^{\frac{1}{2c_1}}, K^2(\log\frac{K^2}{\lambda})^{\frac{1}{2c_1}}\right]$.

\end{proof}



\appendix
\section{}
\begin{lem}\label{spt}
Let $H=-\Delta+V(x)$ be a Schr\"odinger operator with $M=\sup\limits_{x\in \R^d}|V(x)|<\infty$. Then for any $E\geq 0$, we have
\begin{align*}
\sigma(H)\cap[E-M,E+M]\neq \emptyset,
\end{align*}
where $\sigma(H)$ denotes the spectrum of $H.$
\end{lem}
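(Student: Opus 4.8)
The plan is to exploit that the free operator $-\Delta$ has spectrum exactly $[0,\infty)$ and that $V$ is a \emph{bounded} perturbation, so $H=-\Delta+V$ is self-adjoint on $D(-\Delta)=H^2(\R^d)$ (Kato--Rellich). Fix $E\geq 0$ and choose $\xi_0\in\R^d$ with $|\xi_0|^2=E$; since $E\in\sigma(-\Delta)$, I would produce an explicit Weyl (singular) sequence for $-\Delta$ at energy $E$ built from modulated bumps: pick $\chi\in C_0^\infty(\R^d)$ with $\|\chi\|_{L^2}=1$ and set
\[
\psi_n(x)=n^{-d/2}\chi(x/n)\,e^{\mathbf{i}\xi_0\cdot x}\in C_0^\infty(\R^d)\subset H^2(\R^d),\qquad \|\psi_n\|_{L^2}=1 .
\]
A direct computation gives $(-\Delta-E)\psi_n(x)=n^{-d/2}e^{\mathbf{i}\xi_0\cdot x}\big(-2\mathbf{i}\,n^{-1}\xi_0\cdot(\nabla\chi)(x/n)-n^{-2}(\Delta\chi)(x/n)\big)$, and after the substitution $y=x/n$ one reads off $\|(-\Delta-E)\psi_n\|_{L^2}\leq 2n^{-1}|\xi_0|\,\|\nabla\chi\|_{L^2}+n^{-2}\|\Delta\chi\|_{L^2}\to 0$.

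Next I would add the potential back in. Since $\|V\psi_n\|_{L^2}\leq M\|\psi_n\|_{L^2}=M$, the triangle inequality gives
\[
\|(H-E)\psi_n\|_{L^2}\leq\|(-\Delta-E)\psi_n\|_{L^2}+\|V\psi_n\|_{L^2}\leq M+o(1)\qquad (n\to\infty),
\]
hence $\liminf_{n\to\infty}\|(H-E)\psi_n\|_{L^2}\leq M$. By the spectral theorem for the self-adjoint operator $H$, every unit vector $\psi\in D(H)$ satisfies $\|(H-E)\psi\|\geq\mathrm{dist}(E,\sigma(H))$; applying this to $\psi_n$ and passing to the limit yields $\mathrm{dist}(E,\sigma(H))\leq M$. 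Therefore $\sigma(H)$ contains a point within distance $M$ of $E$, i.e. $\sigma(H)\cap[E-M,E+M]\neq\emptyset$, which is the assertion. Equivalently, one could argue by contradiction via Weyl's criterion: if $[E-M,E+M]\cap\sigma(H)=\emptyset$ then $\|(H-E)^{-1}\|<M^{-1}$, which is incompatible with the bound above.

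There is essentially no serious obstacle here; the only points that need (routine) care are the self-adjointness of $H$ as a bounded perturbation of $-\Delta$ and the elementary norm estimate $\|(-\Delta-E)\psi_n\|_{L^2}\to 0$ for the modulated bump functions. Everything else is the standard correspondence between approximate eigenvectors and the spectrum of a self-adjoint operator, and in particular the argument is uniform in $E\geq 0$.
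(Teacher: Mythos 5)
Your proof is correct and follows essentially the same route as the paper: produce a Weyl sequence for $-\Delta$ at energy $E$, use $\|V\psi_n\|\leq M$ together with the spectral-theorem bound $\|(H-E)\psi\|\geq\mathrm{dist}(E,\sigma(H))\|\psi\|$, and let $n\to\infty$. The only cosmetic difference is that you construct the Weyl sequence explicitly from modulated bumps, whereas the paper invokes Weyl's criterion abstractly.
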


\begin{proof}
Let $\mathbb{E}(\cdot)$ be the (projection-valued) spectral measure of $H=-\Delta+V(x)$. Then the Spectral Theorem reads as
\begin{align*}
H=\int_{\sigma(H)}\lambda {\rm d}\mathbb{E}(\lambda).
\end{align*}
Note that $\sigma(-\Delta)=\sigma_{ess}(-\Delta)=[0,\infty)$. From Weyl criterion, for each $E\geq0$ there is a sequence $\{F_n(x)\}_{n\in\N}\subset H^2(\R^d)$ such that
\begin{align*}
&(-\Delta-E)F_n\to 0\ {\rm as}\ n\to\infty,\\
&\|F_n\|_{L^2}=1\ {\rm for}\ \forall\ n.
\end{align*}
For  $n\in \N$, we know $\mu_n(\cdot)=\langle \mathbb{E}(\cdot)F_n,F_n\rangle$ is a positive Borel measure and $\mu_n(\sigma(H))=\|F_n\|_{L^2}^2=1$,
where $\langle F,G\rangle=\int_{\R^d}F(x)\overline{G}(x){\rm d} x$. Obviously, $\|VF_n\|_{L^2}\leq M$. Hence
\begin{align}
\nonumber\inf_{\lambda\in\sigma(H)}|\lambda-E|^2\|F_n\|^2_{L^2}&\leq\int_{\sigma(H)}|\lambda-E|^2{\rm d}\mu_n(\lambda)\\
\nonumber&=\|(-\Delta+V-E)F_n\|_{L^2}^2\\
\label{fnm}&\leq (\|(-\Delta-E)F_n\|_{L^2}+M)^2.
\end{align}
Letting $n\to\infty$ in \eqref{fnm}, we get
\begin{align}\label{disb}
{\rm dist}(E, \sigma(H))\leq M,
\end{align}
which shows $[E-M,E+M]\cap \sigma(H)\neq\emptyset$. For otherwise, there must be ${\rm dist}(E, \sigma(H))\geq {2}M$, which contradicts with \eqref{disb}.
\end{proof}

\section{}

 We write $G_{(\cdot)}=G_{(\cdot)}(E;\Theta)$ for simplicity. 

 We have the following lemmas:

\begin{lem}[Lemma A.1, \cite{Shi19}]\label{pag}
Fix $\bar\rho>0$. Let $\Lambda\subset\mathbb{Z}^{b}$ satisfy $\Lambda\in\mathcal{E}_N$ and let $A,B$ be two linear operators  on $\C^\Lambda$.  We assume further
 \begin{align*}
\|A^{-1}\|&\leq e^{\sqrt{N}},\\
 |A^{-1}(\mathbf{n},\mathbf{n}')|&\leq e^{-\bar\rho|\mathbf{n}-\mathbf{n}'|}\ \mathrm{for}\  |\mathbf{n}-\mathbf{n}'|\geq N/10.
 \end{align*}
Suppose that for all $\mathbf{n},\mathbf{n}'\in\Lambda$, $$|(B-A)(\mathbf{n},\mathbf{n}')|\leq e^{-3\bar\rho N-\bar\rho|\mathbf{n}-\mathbf{n}'|}.$$
Then
\begin{align*}
\|B^{-1}\|&\leq 2\|A^{-1}\|,\\
 |B^{-1}(\mathbf{n},\mathbf{n}')|&\leq |A^{-1}(\mathbf{n},\mathbf{n}')|+e^{-\bar\rho|\mathbf{n}-\mathbf{n}'|}.
 \end{align*}
\end{lem}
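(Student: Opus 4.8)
The plan is a standard perturbative argument: factor $B$ through $A$, control $A^{-1}(B-A)$ by a Neumann series for the norm bound, and use the resolvent identity for the off-diagonal bound. The point that makes everything work is that $\Lambda\in\mathcal{E}_N$ forces $\mathrm{diam}(\Lambda)\le 2N$, so the very strong smallness factor $e^{-3\bar\rho N}$ in the hypothesis on $B-A$ can be played against the \emph{a priori} norm $e^{\sqrt N}$ of $A^{-1}$ (and, once obtained, of $B^{-1}$) uniformly over $\Lambda$. I also note that the statement carries an implicit $N\ge N_0(\bar\rho,b)$, which is harmless since it is always applied past the base scale.

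\emph{Norm bound.} Write $B=A\bigl(I+A^{-1}(B-A)\bigr)$. By Schur's test together with $\sum_{\mathbf{k}\in\Z^{b}}e^{-\bar\rho|\mathbf{k}|}=:C_0(\bar\rho,b)<\infty$, the hypothesis $|(B-A)(\mathbf{n},\mathbf{n}')|\le e^{-3\bar\rho N-\bar\rho|\mathbf{n}-\mathbf{n}'|}$ gives $\|B-A\|\le C_0 e^{-3\bar\rho N}$, hence $\|A^{-1}(B-A)\|\le e^{\sqrt N}C_0e^{-3\bar\rho N}\le 1/2$ for $N\ge N_0(\bar\rho,b)$. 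Then $I+A^{-1}(B-A)$ is invertible with inverse of norm at most $2$ (geometric series), so $B$ is invertible and $\|B^{-1}\|\le 2\|A^{-1}\|$.

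\emph{Off-diagonal decay.} Use the resolvent identity $B^{-1}=A^{-1}-A^{-1}(B-A)B^{-1}$; it remains to bound the entries of the correction term by $e^{-\bar\rho|\mathbf{n}-\mathbf{n}'|}$. In the double sum $\sum_{\mathbf{m},\mathbf{m}'}A^{-1}(\mathbf{n},\mathbf{m})(B-A)(\mathbf{m},\mathbf{m}')B^{-1}(\mathbf{m}',\mathbf{n}')$, estimate $|B^{-1}(\mathbf{m}',\mathbf{n}')|\le 2e^{\sqrt N}$ (previous step), $|(B-A)(\mathbf{m},\mathbf{m}')|\le e^{-3\bar\rho N-\bar\rho|\mathbf{m}-\mathbf{m}'|}$, and $\sum_{\mathbf{m}\in\Lambda}|A^{-1}(\mathbf{n},\mathbf{m})|\le (2N+1)^{b}e^{\sqrt N}$ (splitting at $|\mathbf{n}-\mathbf{m}|=N/10$ and using the assumed off-diagonal decay of $A^{-1}$ only improves the polynomial factor). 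Summing $\sum_{\mathbf{m}'}e^{-\bar\rho|\mathbf{m}-\mathbf{m}'|}\le C_0$, the correction entry is at most $2C_0(2N+1)^{b}e^{2\sqrt N-3\bar\rho N}$. Since $\mathrm{diam}(\Lambda)\le 2N$ we have $|\mathbf{n}-\mathbf{n}'|\le 2N$, so
\[
e^{-\bar\rho|\mathbf{n}-\mathbf{n}'|}\ge e^{-2\bar\rho N},
\]
and it suffices that $2C_0(2N+1)^{b}e^{2\sqrt N-\bar\rho N}\le 1$, which holds for $N\ge N_0(\bar\rho,b)$. This yields $|B^{-1}(\mathbf{n},\mathbf{n}')|\le |A^{-1}(\mathbf{n},\mathbf{n}')|+e^{-\bar\rho|\mathbf{n}-\mathbf{n}'|}$.

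\emph{Main difficulty.} There is no deep obstacle here; the only delicate point is the bookkeeping in the resolvent-identity estimate — recognizing that near the diagonal all one can use is the norm bound (contributing $e^{2\sqrt N}$ for the two factors $A^{-1},B^{-1}$), that this is absorbed by the hypothesis's $e^{-3\bar\rho N}$, and that the target $e^{-\bar\rho|\mathbf{n}-\mathbf{n}'|}$ is bounded below by $e^{-2\bar\rho N}$ precisely because $\Lambda$ sits inside a box of side at most $2N$.
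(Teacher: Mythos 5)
Your proof is correct. The paper itself does not reproduce a proof of this lemma (it is quoted as Lemma A.1 of \cite{Shi19}), but your argument is the standard perturbation-theoretic route one would expect: write $B=A\bigl(I+A^{-1}(B-A)\bigr)$, control $\|A^{-1}(B-A)\|$ by a Schur test and the a priori norm $e^{\sqrt N}$, invert by Neumann series, and then use the resolvent identity $B^{-1}=A^{-1}-A^{-1}(B-A)B^{-1}$ for the entrywise bound. The key bookkeeping observations — that the sum $\sum_{\mathbf{m}\in\Lambda}|A^{-1}(\mathbf{n},\mathbf{m})|$ is only polynomially-times-$e^{\sqrt N}$ large (splitting at $|\mathbf{n}-\mathbf{m}|=N/10$), that the hypothesis carries the decisive factor $e^{-3\bar\rho N}$, and that $\mathrm{diam}(\Lambda)\le 2N$ makes $e^{-\bar\rho|\mathbf{n}-\mathbf{n}'|}\ge e^{-2\bar\rho N}$ — are exactly right, and you correctly flag the implicit $N\ge N_0(\bar\rho,b)$ needed to close both inequalities, which is indeed how the lemma is used throughout the paper (always past a base scale $N_0$, with $\bar\rho$ bounded away from $0$ by $\rho/2$).
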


\begin{lem}[Lemma 3.2, \cite{JLS20}]\label{res1}
 Let $ \bar\rho\in (\epsilon,\rho]$, $M_1\leq N$ and ${\rm diam}(\Lambda)\leq 2N+1$. Suppose that for any $\mathbf{n}\in \Lambda $, there exists some  $ W=W(\mathbf{n})\in \mathcal{E}_M$ with
$M_0\leq M\leq M_1$ such that
$\mathbf{n}\in W\subset \Lambda$,  ${\rm dist} (\mathbf{n},\Lambda \backslash W)\geq {M}/{2}$ and
\begin{align*}
\|G_{W}\|&\leq2 e^{\sqrt{M}},\\
|G_{W}(\mathbf{n},\mathbf{n}')|&\leq  2e^{-\bar\rho|n-n'|}\  {\mathrm{for} \ |\mathbf{n}-\mathbf{n}'|\geq {M}/{10}}.
\end{align*} We assume further that $M_0\geq M_0(\epsilon, \rho, b,d)>0$.
Then
\begin{align*}
  \|G_{\Lambda}\|\leq 4 (2M_1+1)^{b} e^{\sqrt{M_1}}.
\end{align*}
\end{lem}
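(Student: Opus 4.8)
The plan is to prove this by the standard resolvent-identity (``coupling'') scheme: one uses the local bounds on $G_W$ at all scales $M$ with $M_0\le M\le M_1$ to control the global resolvent on $\Lambda$ through a self-closing recursion for the $\ell^1$-column sums of $G_\Lambda$, and then invokes Schur's test. The basic tool is the Poisson-type expansion: if $W\subset\Lambda$ is such that $h_W-E$ is invertible --- which holds by hypothesis, since $\|G_W\|\le 2e^{\sqrt M}<\infty$ --- then for any $\mathbf n\in W$ and $\mathbf m\in\Lambda$,
\begin{align*}
G_{\Lambda}(\mathbf n,\mathbf m)=G_{W}(\mathbf n,\mathbf m)\,\mathbbm 1_{\mathbf m\in W}-\sum_{\mathbf u\in W,\ \mathbf v\in\Lambda\setminus W}G_{W}(\mathbf n,\mathbf u)\,\varepsilon\widehat V_{\mathbf u-\mathbf v}\,G_{\Lambda}(\mathbf v,\mathbf m),
\end{align*}
using that the only nonzero off-diagonal entries of $h_\Lambda-E$ linking $W$ to $\Lambda\setminus W$ are $\varepsilon\widehat V_{\mathbf u-\mathbf v}$, with $\varepsilon\ll 1$ and $|\widehat V_{\mathbf k}|\le e^{-\rho|\mathbf k|}$; the same identity holds, without the $\mathbbm 1_{\mathbf m\in W}$ term, for any $\psi$ with $h_\Lambda\psi=E\psi$.

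The key estimate I would establish first concerns the smallness of the coupling weight: for every $\mathbf n$ and every $\mathbf v\in\Lambda\setminus W(\mathbf n)$ --- so that $|\mathbf n-\mathbf v|\ge{\rm dist}(\mathbf n,\Lambda\setminus W(\mathbf n))\ge M(\mathbf n)/2\ge M_0/2$ ---
\begin{align*}
\sum_{\mathbf u\in W(\mathbf n)}|G_{W(\mathbf n)}(\mathbf n,\mathbf u)|\,\varepsilon|\widehat V_{\mathbf u-\mathbf v}|\le C(\epsilon,\rho,b,d)\,e^{-\frac{\bar\rho}{4}|\mathbf n-\mathbf v|}.
\end{align*}
This I would get by splitting according to $|\mathbf u-\mathbf n|$. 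If $|\mathbf u-\mathbf n|\ge M(\mathbf n)/10$, the off-diagonal bound $|G_{W(\mathbf n)}(\mathbf n,\mathbf u)|\le 2e^{-\bar\rho|\mathbf n-\mathbf u|}$ applies and the resulting convolution $\sum_{\mathbf u}e^{-\bar\rho|\mathbf n-\mathbf u|}e^{-\rho|\mathbf u-\mathbf v|}$ is handled via $|\mathbf n-\mathbf u|+|\mathbf u-\mathbf v|\ge|\mathbf n-\mathbf v|$, $\bar\rho\le\rho$, and convergence of $\sum_{\mathbf u}e^{-\frac{\bar\rho}{2}|\mathbf n-\mathbf u|}$ (where $\bar\rho>\epsilon$ enters). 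If $|\mathbf u-\mathbf n|< M(\mathbf n)/10$ only the crude bound $|G_{W(\mathbf n)}(\mathbf n,\mathbf u)|\le 2e^{\sqrt{M(\mathbf n)}}$ is available, but then $|\mathbf u-\mathbf v|\ge|\mathbf n-\mathbf v|-M(\mathbf n)/10\ge\frac45|\mathbf n-\mathbf v|$ and $M(\mathbf n)\le 2|\mathbf n-\mathbf v|$, so $\varepsilon|\widehat V_{\mathbf u-\mathbf v}|\le e^{-\rho|\mathbf u-\mathbf v|}$ beats $(M(\mathbf n))^b e^{\sqrt{M(\mathbf n)}}$ as soon as $M_0=M_0(\epsilon,\rho,b,d)$ is chosen large. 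This short-range case is the one genuinely delicate point.

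With this in hand I would first check that $h_\Lambda-E$ is invertible, so that $G_\Lambda$ is a well-defined finite matrix and all $\ell^1$-sums below are finite: if $h_\Lambda\psi=E\psi$ with $\psi\not\equiv 0$, pick $\mathbf n_*$ maximizing $|\psi|$ over $\Lambda$, apply the solution identity on $W(\mathbf n_*)$, and the key estimate gives $|\psi(\mathbf n_*)|\le\big(\sum_{\mathbf v\notin W(\mathbf n_*)}Ce^{-\frac{\bar\rho}{4}|\mathbf n_*-\mathbf v|}\big)|\psi(\mathbf n_*)|\le Ce^{-\bar\rho M_0/16}|\psi(\mathbf n_*)|<|\psi(\mathbf n_*)|$, a contradiction. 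Then, fixing $\mathbf m\in\Lambda$ and writing $S(\mathbf m)=\sum_{\mathbf n\in\Lambda}|G_\Lambda(\mathbf n,\mathbf m)|$, I would sum the Poisson identity over $\mathbf n$ (each with $W=W(\mathbf n)$): the diagonal term $\sum_{\mathbf n:\,\mathbf m\in W(\mathbf n)}|G_{W(\mathbf n)}(\mathbf n,\mathbf m)|$ is at most $3(2M_1+1)^b e^{\sqrt{M_1}}$, since the $O(M_1^b)$ sites $\mathbf n$ with $|\mathbf n-\mathbf m|<M(\mathbf n)/10$ each contribute $\le 2e^{\sqrt{M_1}}$ and the rest contribute $\sum 2e^{-\bar\rho|\mathbf n-\mathbf m|}\le C(\epsilon,b)$; and the coupling term, after exchanging the order of summation and using the key estimate, is at most $\big(\sup_{\mathbf v}\sum_{\mathbf n:\,\mathbf v\notin W(\mathbf n)}Ce^{-\frac{\bar\rho}{4}|\mathbf n-\mathbf v|}\big)S(\mathbf m)=:\theta S(\mathbf m)$ with $\theta\le Ce^{-\bar\rho M_0/16}\le 1/4$ for $M_0$ large. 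Hence $S(\mathbf m)\le 3(2M_1+1)^b e^{\sqrt{M_1}}+\theta S(\mathbf m)$, so $S(\mathbf m)\le 4(2M_1+1)^b e^{\sqrt{M_1}}$ for every $\mathbf m$. Finally, since $h_\Lambda$ is Hermitian so is $G_\Lambda$, so its row and column $\ell^1$-sums coincide in modulus, and Schur's test yields $\|G_\Lambda\|\le\sup_{\mathbf m}S(\mathbf m)\le 4(2M_1+1)^b e^{\sqrt{M_1}}$.

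The main obstacle is precisely the short-range term in the coupling sum discussed above, where the only a priori control on $G_W$ is the (exponentially large) operator-norm bound $2e^{\sqrt M}$; what rescues the estimate is that the decay rate $\rho$ of the hopping $\varepsilon\widehat V$ is fixed while the geometry forces the relevant lattice distance to be $\gtrsim M\ge M_0$, so that $e^{\sqrt M}e^{-\rho M/5}\to 0$. This is exactly why one needs $M_0=M_0(\epsilon,\rho,b,d)$ sufficiently large, and everything else reduces to routine estimates on geometric sums whose convergence uses $\bar\rho>\epsilon$.
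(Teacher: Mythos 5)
The paper does not actually prove this lemma; it is quoted verbatim from \cite{JLS20} (Lemma~3.2 there), so there is no in-paper argument to compare against. Your proposal supplies a complete proof, and it is correct; it is also essentially the standard ``coupling lemma'' argument that underlies the cited result (and Bourgain's original versions): apply the resolvent/Poisson identity at each $\mathbf{n}$ with its good local box $W(\mathbf{n})$, reduce the off-diagonal coupling to the convolution estimate
\[
\sum_{\mathbf{u}\in W(\mathbf{n})}|G_{W(\mathbf{n})}(\mathbf{n},\mathbf{u})|\,\varepsilon\,|\widehat V_{\mathbf{u}-\mathbf{v}}|\;\lesssim\; e^{-\frac{\bar\rho}{4}|\mathbf{n}-\mathbf{v}|},\qquad \mathbf{v}\notin W(\mathbf{n}),
\]
split according to whether $|\mathbf{u}-\mathbf{n}|\ge M/10$ or not (the second, ``short-range'' regime being the delicate one, rescued by $|\mathbf{u}-\mathbf{v}|\ge\frac45|\mathbf{n}-\mathbf{v}|\ge\frac{2}{5}M\ge\frac{2}{5}M_0$ so that $e^{-\rho|\mathbf{u}-\mathbf{v}|}$ dominates $M^b e^{\sqrt M}$ once $M_0$ is large), then close the recursion for the $\ell^1$-column sums and invoke Schur's test together with Hermiticity of $G_\Lambda$. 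You also correctly dispatch the a-priori finiteness issue by first showing $h_\Lambda-E$ is invertible via a maximum-modulus argument on an alleged eigenvector. A few cosmetic points: the ``diagonal'' term bound $3(2M_1+1)^be^{\sqrt{M_1}}$ is generous but should be stated with the observation that the long-range tail $\sum 2e^{-\bar\rho|\mathbf{n}-\mathbf{m}|}\le C(\epsilon,b)$ is absorbable only because $M_1\ge M_0(\epsilon,\rho,b,d)$ is large; and the constant in the key convolution estimate depends on $\epsilon$ through $\sum_{\mathbf{u}}e^{-\frac{\bar\rho}{2}|\mathbf{u}|}$ with $\bar\rho>\epsilon$, which in turn is what forces $M_0$ to depend on $\epsilon$. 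These are exactly the dependences the lemma's statement allows, so the argument is internally consistent.
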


\begin{lem}[Theorem 3.3, \cite{JLS20}]\label{res2}
Let $\Lambda_1\subset\Lambda\subset\Z^{b}$ satisfy ${\rm diam}(\Lambda)\leq 2N+1$, $ {\rm diam}(\Lambda_1)\leq N^{\frac{1}{3b}}$. Let $ M_0\geq (\log N)^{2}$ and $\bar\rho\in \left[\frac{\rho}{2},\rho\right]$.
Suppose that for any $\mathbf{n}\in \Lambda \backslash\Lambda_1$, there exists some  $ W=W(\mathbf{n})\in \mathcal{E}_M$ with
$M_0\leq M\leq N^{1/3}$ such that
$\mathbf{n}\in W\subset \Lambda\backslash\Lambda_1$, ${\rm dist} (\mathbf{n},\Lambda\backslash \Lambda_1\backslash W)\geq {M}/{2}$  and
\begin{align*}
\|G_{W}\|&\leq e^{\sqrt{M}},\\
 |G_{W}(\mathbf{n},\mathbf{n}')|&\leq  e^{- \bar{\rho}|\mathbf{n}-\mathbf{n}'|}\  {\mathrm{for} \ |\mathbf{n}-\mathbf{n}'|\geq {M}/{10}}.
\end{align*}
Suppose further that
\begin{align*}
\|G_{\Lambda}\|\leq e^{\sqrt{N}}.
\end{align*}
Then
\begin{align*}
 |G_{\Lambda}(\mathbf{n},\mathbf{n}')|\leq e^{-(\bar{\rho}-\frac{C}{\sqrt{M_0}})|\mathbf{n}-\mathbf{n}'|}\ \mathrm{for}\  |\mathbf{n}-\mathbf{n}'|\geq{N}/{10},
\end{align*}
where $C=C(\rho,b,d)>0$ and $N\geq N_0(\rho,b,d)>0$.
\end{lem}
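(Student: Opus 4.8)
The plan is to iterate the block (coupling) resolvent identity, exactly as in the inductive step of a multi-scale scheme. Abbreviate $G_\bullet=G_\bullet(E;\Theta)$ and recall that for $W\subset\Lambda$, $\mathbf{n}\in W$, $\mathbf{n}'\in\Lambda\setminus W$,
\begin{align*}
G_\Lambda(\mathbf{n},\mathbf{n}')=-\varepsilon\sum_{\mathbf{u}\in W,\ \mathbf{v}\in\Lambda\setminus W}G_W(\mathbf{n},\mathbf{u})\,\widehat V_{\mathbf{u}-\mathbf{v}}\,G_\Lambda(\mathbf{v},\mathbf{n}'),
\end{align*}
only the off-diagonal part contributing (as $\mathbf{u}\neq\mathbf{v}$), with $|\widehat V_{\mathbf{u}-\mathbf{v}}|\leq e^{-\rho|\mathbf{u}-\mathbf{v}|}$. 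Fix $\mathbf{n},\mathbf{n}'$ with $|\mathbf{n}-\mathbf{n}'|\geq N/10$; since $\mathrm{diam}(\Lambda_1)\leq N^{1/3b}\ll N/10$, at most one of $\mathbf{n},\mathbf{n}'$ lies in $\Lambda_1$, and I would assume $\mathbf{n}\in\Lambda\setminus\Lambda_1$. I would apply the identity with $W=W(\mathbf{n})$, then reapply it to every resulting factor $G_\Lambda(\mathbf{v},\mathbf{n}')$ whose first argument still lies in $\Lambda\setminus\Lambda_1$, terminating a branch as soon as that argument enters $\Lambda_1$ or sits in a box containing $\mathbf{n}'$ (factors with first argument outside $\Lambda$ vanish). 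Each non-terminal step moves the current point a distance $\geq\mathrm{dist}(\mathbf{n},\Lambda\setminus\Lambda_1\setminus W)\geq M/2\geq M_0/2$, so the expansion terminates after at most $O\!\left(N/(\log N)^2\right)$ steps along any branch.

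The analytic core will be the per-step estimate. Splitting the sum over $\mathbf{u}$ according to whether $|\mathbf{n}-\mathbf{u}|<M/10$ or $|\mathbf{n}-\mathbf{u}|\geq M/10$, and using $\|G_W\|\leq e^{\sqrt M}$ on the first range, $|G_W(\mathbf{n},\mathbf{u})|\leq e^{-\bar\rho|\mathbf{n}-\mathbf{u}|}$ on the second, $\bar\rho\leq\rho$, and the triangle inequality, one checks that a single application over a box of scale $M$ deteriorates the exponential rate by only $O(1/\sqrt{M})\leq O(1/\sqrt{M_0})$ per unit distance travelled, the polynomial site-counting factors $M^{O(b)}$ being absorbed since the distance per step is $\geq M/2$ and $M\geq M_0\geq(\log N)^2$. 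Telescoping along a branch that ends in a box $W'\ni\mathbf{n}'$ then gives, for that branch, the bound $e^{-(\bar\rho-C/\sqrt{M_0})|\mathbf{n}-\mathbf{n}'|}$: the passage through $W'$, a set of size $\leq N^{1/3}\ll|\mathbf{n}-\mathbf{n}'|$, is negligible, and the number of branches of a fixed combinatorial type is kept under control by the $e^{-\rho|\cdot|}$ weights, so summing over all branches of length $O(N/(\log N)^2)$ costs only a subexponential (in $N$) overhead. I expect the rate bookkeeping — keeping the loss genuinely of size $1/\sqrt{M_0}$ rather than something larger — to be the most delicate routine part; this is where the hypotheses on $G_W$ and the elementary-region structure enter, and it is carried out in the proof of Theorem 3.3 of \cite{JLS20}.

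The main obstacle will be the branches that terminate by entering the singular set $\Lambda_1$, where the only available control on the leftover factor is the a priori bound $\|G_\Lambda\|\leq e^{\sqrt N}$. Such a branch has travelled from $\mathbf{n}$ to an exit point $\mathbf{v}\in\Lambda_1$; if $\mathbf{n}'$ lies far from $\Lambda_1$ — the only situation in which $|\mathbf{n}-\mathbf{v}|$ could fail to be large — one also runs the symmetric expansion from $\mathbf{n}'$ (legitimate, since then $\mathbf{n}'\in\Lambda\setminus\Lambda_1$), which likewise ends in $\Lambda_1$ at a point $\mathbf{w}$, leaving a leftover factor $G_\Lambda(\mathbf{w},\mathbf{v})$ again bounded by $e^{\sqrt N}$. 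The decay accumulated along the two pieces then runs over a total distance $\geq|\mathbf{n}-\mathbf{n}'|-\mathrm{diam}(\Lambda_1)\geq|\mathbf{n}-\mathbf{n}'|-N^{1/3b}$. Since $|\mathbf{n}-\mathbf{n}'|\geq N/10$ while $M_0\leq N^{1/3}$, one has $(1/\sqrt{M_0})|\mathbf{n}-\mathbf{n}'|\gtrsim N^{5/6}$, which dwarfs both $\sqrt N$ and $N^{1/3b}$, so the factor $e^{\sqrt N}$ and the $e^{O(N^{1/3b})}$ correction are swallowed into a slightly larger slack; hence the $\Lambda_1$-terminating branches also contribute at most $e^{-(\bar\rho-C'/\sqrt{M_0})|\mathbf{n}-\mathbf{n}'|}$. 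Combining the two families of branches, and enlarging $N_0(\rho,b,d)$ so that all polynomial and combinatorial overheads fit inside the loss term, would give the claimed estimate.
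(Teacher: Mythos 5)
The paper does not prove this statement; it is quoted verbatim as Theorem 3.3 of \cite{JLS20} and used as a black box, so there is no in-house proof to compare against. Your plan — iterate the geometric resolvent identity, stop a branch when the current point enters $\Lambda_1$ or a box around $\mathbf{n}'$, run the expansion from both endpoints when a branch hits $\Lambda_1$, and absorb the leftover $\|G_\Lambda\|\leq e^{\sqrt N}$ and the $\mathrm{diam}(\Lambda_1)\leq N^{1/3b}$ correction into the $C/\sqrt{M_0}$ loss via the inequality $\sqrt N+\bar\rho N^{1/3b}\ll |\mathbf{n}-\mathbf{n}'|/\sqrt{M_0}\gtrsim N^{5/6}$ — is indeed the standard paving argument behind this type of lemma, and that bookkeeping is correct.

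The place you call ``the most delicate routine part'' is, however, where your sketch does not close, and it is worth naming precisely. In the near/far split at $|\mathbf{n}-\mathbf{u}|=M/10$ (the only threshold the hypothesis permits), the near range contributes, for a step of length $|\mathbf{n}-\mathbf{v}|\geq M/2$,
\[
\sum_{|\mathbf{n}-\mathbf{u}|<M/10}\|G_W\|\,e^{-\rho|\mathbf{u}-\mathbf{v}|}
\le (M/5)^b\, e^{\sqrt M+\rho M/10}\, e^{-\rho|\mathbf{n}-\mathbf{v}|},
\]
and since the dead zone $M/10$ is a fixed fraction $1/5$ of the minimal step $M/2$, the exponent budget works out to
\[
\sqrt M+\rho M/10-(\rho-\bar\rho)|\mathbf{n}-\mathbf{v}|
\le \sqrt M + M\bigl(\tfrac{\rho}{10}-\tfrac{\rho-\bar\rho}{2}\bigr),
\]
which is $O(\sqrt M)$ only when $\bar\rho\le \tfrac{4\rho}{5}$; for $\bar\rho\in(\tfrac{4\rho}{5},\rho]$ it is $\Theta(M)$, i.e.\ a constant rate loss, not $O(1/\sqrt M)$ per unit distance. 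So the phrase ``deteriorates the exponential rate by only $O(1/\sqrt M)$ per unit distance travelled'' is not a routine triangle-inequality check over the full stated range $\bar\rho\in[\rho/2,\rho]$. You should either restrict the statement to $\bar\rho\le\tfrac{4\rho}{5}$ — which is in fact all this paper ever needs, since the initial rate in Proposition 3.3 is exactly $\tfrac{4\rho}{5}$ and only decreases thereafter — or supply the additional structure used in \cite{JLS20} that rules out every step being near-diagonal of length $\approx M/2$. As written, the per-step rate accounting has a genuine gap at the top of the allowed range of $\bar\rho$.
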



 \end{document}